\documentclass{article} 

\usepackage[utf8x]{inputenc}

\usepackage[USenglish]{babel}

\usepackage[T1]{fontenc}

\usepackage{amsmath}            %
\usepackage{amssymb}            %
\usepackage{mathrsfs}           %
\usepackage{mathtools}          %
\usepackage[section]{algorithm} %

\usepackage{microtype}          %
\usepackage[normalem]{ulem}     %

\usepackage{geometry}           
\usepackage{adjustbox}          %

\usepackage{graphicx}           %
\usepackage{epstopdf}           %
\usepackage{subcaption}         %
\usepackage{booktabs}           %

\usepackage{color}              %
\usepackage[table]{xcolor}      %
\usepackage{tikz}
  \usetikzlibrary{calc}
  \usetikzlibrary{external} 
  \tikzexternalize 
\usepackage{pgfplots}
  \pgfplotsset{compat=newest}
\usepackage[outline]{contour}   %
  \contourlength{1.2pt}

\usepackage[group-separator={,}]{siunitx}
  \DeclareSIUnit[number-unit-product = {\,}] \year{yr}

\usepackage{hyperref}           %

\usepackage{algpseudocode}      %
\usepackage{amsthm}             
\usepackage{cleveref}           %
\usepackage{cite}               %

\graphicspath{{graphics/}}

%

\definecolor{jr@darkred}{RGB}{153,0,0}
\definecolor{jr@medred}{RGB}{204,51,51}
\definecolor{jr@darkgreen}{RGB}{0,102,0}
\definecolor{jr@lightgreen}{RGB}{233,255,233}
\definecolor{jr@darkcyan}{RGB}{51,102,102}
\definecolor{jr@lightcyan}{RGB}{233,255,255}
\definecolor{jr@darkgray}{RGB}{51,51,51}
\definecolor{jr@medgray}{RGB}{102,102,102}
\definecolor{jr@lightgray}{RGB}{229,229,229}

\definecolor{jr@red}{RGB}{228,26,28}
\definecolor{jr@blue}{RGB}{55,126,184}
\definecolor{jr@green}{RGB}{77,175,74}
\definecolor{jr@purple}{RGB}{152,78,163}
\definecolor{jr@orange}{RGB}{255,127,0}
\definecolor{jr@yellow}{RGB}{255,255,51}
\definecolor{jr@brown}{RGB}{166,86,40}
\definecolor{jr@pink}{RGB}{247,129,191}
\definecolor{jr@gray}{RGB}{153,153,153}

\definecolor{jr@div-darkred}{RGB}{178,24,43}
\definecolor{jr@div-medred}{RGB}{239,138,98}
\definecolor{jr@div-lightred}{RGB}{253,219,199}
\definecolor{jr@div-lightblue}{RGB}{209,229,240}
\definecolor{jr@div-medblue}{RGB}{103,169,207}
\definecolor{jr@div-darkblue}{RGB}{33,102,172}

\definecolor{ices@orange}{RGB}{191,87,0}
\definecolor{ices@cyanblue}{RGB}{94,156,174}
\definecolor{ices@lightcyanblue}{RGB}{91,198,232}
\definecolor{ices@darkcyanblue}{RGB}{57,74,88}
\definecolor{ices@verydarkcyanblue}{RGB}{55,66,74}
\definecolor{ices@gray}{RGB}{200,201,199}
\definecolor{ices@green}{RGB}{114,206,155}
\definecolor{ices@yellow}{RGB}{242,169,0}

\let\diag\relax

%

\newcommand{\trans}{\mathsf{T}}
\newcommand{\transpose}{^{\trans}}
\newcommand{\adjoint}{^{\ast}}
\newcommand{\dual}{^{\prime}}

\newcommand{\gradient}{\nabla}
\newcommand{\symgradient}{\nabla_{\!\mathrm{s}}}
\newcommand{\divergence}{\nabla\cdot}

\newcommand{\intd}{\,\mathrm{d}}

\newcommand{\abs}[1]{\left|#1\right|}
\newcommand{\norm}[1]{\left\|#1\right\|}


\newcommand{\innerprod}[2]{\left(#1\mathbin{,}#2\right)}


\newcommand{\Lebesgue}[1]{\ensuremath{ L^{#1} }}
\newcommand{\LL}{\Lebesgue{2}}
\newcommand{\Linf}{\Lebesgue{\infty}}

\newcommand{\Sobolev}[1]{\ensuremath{ H^{#1} }}

\newcommand{\LandauO}{\ensuremath{\mathcal{O}}}

\DeclareMathOperator{\diag}{diag}

\DeclareMathOperator{\dynamicratio}{DR}


\DeclareMathOperator*{\esssup}{ess\,sup}



\newcommand{\set}[1]{\ensuremath{\mathbb{#1}}}

\newcommand{\setP}{\set{P}}
\newcommand{\Q}{\set{Q}}
\newcommand{\R}{\set{R}}

\newcommand{\boldvar}[1]{\ensuremath{\boldsymbol{#1}}}
\newcommand{\boldvec}[1]{\ensuremath{\mathbf{#1}}}
\newcommand{\mat}[1]{\boldvec{#1}}
\ifdefined\vec
\renewcommand{\vec}[1]{\boldvec{#1}}
\else
\newcommand{\vec}[1]{\boldvec{#1}}
\fi

\newcommand{\bn}{\boldvar{n}}
\newcommand{\bT}{\boldvar{T}}
\newcommand{\bx}{\boldvar{x}}

\newcommand{\unitvec}{\vec{e}}
\newcommand{\zerovec}{\vec{0}}
\newcommand{\onevec}{\vec{1}}
\newcommand{\IdMat}{\mat{I}}
\newcommand{\ZeroMat}{\mat{0}}


\newcommand{\vel}{\boldvar{u}}
\newcommand{\press}{p}

\newcommand{\visc}{\mu}

\newcommand{\viscstresstensor}{\boldvar{\tau}}

\newcommand{\rhsvel}{\boldvar{f}}



\newcommand{\viscmin}{\visc_{\mathrm{min}}}
\newcommand{\viscmax}{\visc_{\mathrm{max}}}


\newcommand{\FncSpVel}{(\Sobolev{1}(\Omega))^{d}}

\newcommand{\FncSpPress}{\Lebesgue{2}(\Omega)}
\newcommand{\FncSpPressMean}{\Lebesgue{2}(\Omega)/\R}
\newcommand{\FncSpVelGrad}{(\Lebesgue{2}(\Omega))^{d \times d}}
\newcommand{\FncSpVelDual}{(\Sobolev{-1}(\Omega))^{d}}
\newcommand{\FncSpPressHiReg}{\Sobolev{2}(\Omega)}

\newcommand{\ViscStressOp}{A}
\newcommand{\GradOp}{B\adjoint}
\newcommand{\DivOp}{B}
\newcommand{\GradOpExt}{B\adjoint} 
\newcommand{\DivOpExt}{B} 
\newcommand{\SchurOp}{S}
\newcommand{\PressPoissonOp}{K} 

\newcommand{\SchurApproxOpWBFBT}{\tilde{\SchurOp}_{\mathrm{w\text{-}BFBT}}}

\newcommand{\bfbtweight}{w}
\newcommand{\bfbtweightleft}{\bfbtweight_{l}}
\newcommand{\bfbtweightright}{\bfbtweight_{r}}

\newcommand{\discrQ}[1]{\ensuremath{ \Q_{#1} }}
\newcommand{\discrP}[1]{\ensuremath{ \setP_{#1} }}
\newcommand{\discrPdisc}[1]{\ensuremath{ \setP_{#1}^{\mathrm{disc}} }}
\newcommand{\discrPbubble}[1]{\ensuremath{ \setP_{#1}^{\mathrm{bubble}} }}
\newcommand{\discrQPdisc}[2]{\ensuremath{ \discrQ{#1}\times\discrPdisc{#2} }}
\newcommand{\discrQQ}[2]{\ensuremath{ \discrQ{#1}\times\discrQ{#2} }}
\newcommand{\discrPP}[2]{\ensuremath{ \discrP{#1}\times\discrP{#2} }}
\newcommand{\discrPbubblePdisc}[2]{\ensuremath{ \discrPbubble{#1}\times\discrPdisc{#2} }}

\newcommand{\velvec}{\vec{u}}
\newcommand{\pressvec}{\vec{p}}
\newcommand{\rhsvelvec}{\vec{f}}
\newcommand{\rhspressvec}{\vec{g}}

\newcommand{\ViscStressMat}{\mat{A}}
\newcommand{\GradMat}{\mat{B}\transpose}
\newcommand{\DivMat}{\mat{B}}
\newcommand{\SchurMat}{\mat{S}}
\newcommand{\ViscStressPcMat}{\tilde{\ViscStressMat}}
\newcommand{\SchurPcMat}{\tilde{\SchurMat}}

\newcommand{\MassMat}{\mat{M}}
\newcommand{\MassLumpMat}{\tilde{\mat{M}}}
\newcommand{\VelMassMat}{\MassMat_{\vel}}
\newcommand{\VelMassLumpMat}{\MassLumpMat_{\vel}}
\newcommand{\PressMassMat}{\MassMat_{\press}}
\newcommand{\PressMassLumpMat}{\MassLumpMat_{\press}}

\newcommand{\BfbtScalLeftMat}{\mat{C}}
\newcommand{\BfbtScalRightMat}{\mat{D}}
\newcommand{\WbfbtScalLeftMat}{\BfbtScalLeftMat_{\bfbtweightleft}}
\newcommand{\WbfbtScalRightMat}{\BfbtScalRightMat_{\bfbtweightright}}
\newcommand{\PressPoissonMat}{\mat{K}}
\newcommand{\PressPoissonLeftMat}{\PressPoissonMat_{\bfbtweightleft}}
\newcommand{\PressPoissonRightMat}{\PressPoissonMat_{\bfbtweightright}}
\newcommand{\PressPoissonLeftPcMat}{\tilde{\PressPoissonMat}_{\bfbtweightleft}}
\newcommand{\PressPoissonRightPcMat}{\tilde{\PressPoissonMat}_{\bfbtweightright}}

\newcommand{\veltest}{\boldvar{v}}
\newcommand{\presstest}{q}


%
%






%
%

%
%






\theoremstyle{plain}
\newtheorem{lemma}{Lemma}[section]
\newtheorem{theorem}[lemma]{Theorem}
\newtheorem{corollary}[lemma]{Corollary}

\theoremstyle{definition}

\theoremstyle{remark}
\newtheorem{remark}[lemma]{Remark}

\geometry{
  letterpaper,
  margin=1in,
}


\algblockdefx[ParallelFor]{ParallelFor}{EndParallelFor}%
  [1]{\textbf{parallel for} #1 \textbf{do}}%
  {\textbf{end parallel for}}

\algblockdefx[Atomic]{Atomic}{EndAtomic}%
  [0]{\textbf{atomic}}%
  {\textbf{end atomic}}

\newcommand{\infigure}[1]{\textit{#1}}

\crefname{part}{Part}{Parts}
\crefname{chapter}{Chapter}{Chapter}
\crefname{section}{Section}{Sections}
\crefname{subsection}{Section}{Sections}
\crefname{subsubsection}{Section}{Sections}
\crefname{equation}{Equation}{Equations}
\crefname{figure}{Figure}{Figures}
\crefname{table}{Table}{Tables}
\crefname{definition}{Definition}{Definition}
\crefname{lemma}{Lemma}{Lemmata}
\crefname{theorem}{Theorem}{Theorems}

\hypersetup{
  colorlinks=false,  %
  frenchlinks=false,
  pdfborder={0 0 0},  %
  naturalnames=false,
  hypertexnames=false,
  breaklinks
}

\makeatletter
\newcounter{algorithmicH} %
\let\oldalgorithmic\algorithmic
\renewcommand{\algorithmic}{%
  \stepcounter{algorithmicH} %
  \oldalgorithmic} %
\renewcommand{\theHALG@line}{ALG@line.\thealgorithmicH.\arabic{ALG@line}}
\makeatother

\makeatletter
\def\addlegendimage{\pgfplots@addlegendimage}
\makeatother

\newcommand{\minor}[1]{\textcolor{jr@medgray}{#1}}

\newcommand{\schurMass}[0]{\ensuremath{ \PressMassMat(1/\visc) }}
\newcommand{\BFBT}[0]{\ensuremath{ \text{BFBT} }}
\newcommand{\massBFBT}[0]{\ensuremath{ \VelMassMat\text{-BFBT} }}
\newcommand{\diagBFBT}[0]{\ensuremath{ \diag(\ViscStressMat)\text{-BFBT} }}
\newcommand{\wBFBT}[0]{\ensuremath{ \text{w-BFBT} }}

\newcommand{\HMGwBFBT}[0]{\ensuremath{ \text{HMG}\! + \!\wBFBT }}

\newcommand{\sinkerind}{\ensuremath{\chi}}
\newcommand{\sinkerdecay}{\ensuremath{\delta}}
\newcommand{\sinkerwidth}{\ensuremath{\omega}}
\newcommand{\bdramp}{\ensuremath{a}}

\hyphenation{GMRES}
\hyphenation{Hil-bert}

\begin{document}

\title{%
  Weighted BFBT Preconditioner for Stokes Flow Problems
  with Highly Heterogeneous Viscosity%
  \thanks{%
    This research was partially supported by
    NSF grants CMMI-1028889 %
           and ARC-0941678 %
    and
    U.S.\ Department of Energy grants DE-FC02-13ER26128 %
    and
    the Scientific Discovery through
    Advanced Computing (SciDAC) program.
  }
}
\author{%
Johann Rudi%
\thanks{Institute for Computational Engineering and Sciences,
        The University of Texas at Austin, USA
        (\texttt{johann@ices.utexas.edu}, \texttt{omar@ices.utexas.edu}).}%
\and
Georg Stadler%
\thanks{Courant Institute of Mathematical Sciences,
        New York University, USA
        (\texttt{stadler@cims.nyu.edu}).}%
\and
Omar Ghattas%
\footnotemark[2] %
\thanks{Jackson School of Geosciences and Department of Mechanical Engineering,
        The University of Texas at Austin, USA.}%
}

\pagestyle{myheadings}
\thispagestyle{plain}
\markboth{
  Johann Rudi, Georg Stadler, and Omar Ghattas%
}{
  \wBFBT\ Preconditioner for Stokes Flow Problems%
}

\ifpdf
\hypersetup{
  pdftitle={
    Weighted BFBT Preconditioner for Stokes Flow Problems
    with Highly Heterogeneous Viscosity},
  pdfauthor={Johann Rudi, Georg Stadler, and Omar Ghattas},
}
\fi

\maketitle

\begin{abstract}
  We present a weighted \BFBT\ approximation (\wBFBT) to the inverse Schur
  complement of a Stokes system with highly heterogeneous viscosity.
  When used as part of a Schur complement-based
  Stokes preconditioner,
  we observe robust fast convergence %
  for Stokes problems with smooth but
  highly varying (up to 10 orders of magnitude) viscosities,
  optimal algorithmic scalability with respect to mesh refinement,
  and only a mild dependence on the polynomial order of high-order finite
  element discretizations (\discrQPdisc{k}{k-1}, order $k\ge2$).
  For certain difficult problems, we demonstrate numerically that
  \wBFBT\ significantly improves Stokes solver
  convergence over the widely used inverse viscosity-weighted pressure mass
  matrix approximation of the Schur complement.
  In addition, we derive theoretical eigenvalue bounds to prove spectral
  equivalence of \wBFBT.
  Using detailed numerical experiments, we discuss modifications to \wBFBT\ at
  Dirichlet boundaries that decrease the number of iterations.
  The overall algorithmic performance of the Stokes solver is governed
  by the efficacy of \wBFBT\ as a Schur complement approximation and, in
  addition, by our parallel hybrid spectral-geometric-algebraic multigrid (HMG)
  method, which we use to approximate the inverses of the viscous block and
  variable-coefficient pressure Poisson operators within \wBFBT.
  Building on the scalability of HMG, our Stokes solver achieves
  a parallel efficiency of 90\% while weak scaling
  over a more than 600-fold increase from 48 to all 30,000 cores of
  TACC's Lonestar~5 supercomputer.
\end{abstract}



\section{Introduction}
\label{sec:intro}

\subsection{Motivation and governing equations}
\label{sec:motivation}

Many problems in science and engineering involve creeping flows of
non-Newtonian fluids~\cite{GlowinskiXu11, Rajagopal93}.  Important
examples can be found in geophysical fluid flows, where the
incompressible Stokes equations with power-law rheology have become a
prototypical continuum mechanical description for creeping flows
occurring in mantle convection~\cite{SchubertTurcotteOlson01}, magma
dynamics~\cite{McKenzie84},
and ice flow~\cite{Hutter83}.
The linearization of the nonlinear momentum equations, for instance
within a Newton method, leads to
incompressible Stokes-like equations with highly
heterogeneous viscosity fields.

In particular, simulations of Earth's mantle convection at global scale
\cite{StadlerGurnisBursteddeEtAl10} exhibit extreme computational challenges
due to a highly heterogeneous
viscosity stemming from
its dependence on temperature and
strain rate as well as sharp viscosity gradients in narrow regions
modeling tectonic plate boundaries (six orders of magnitude drop in
$\sim$\SI{5}{\kilo\meter})
\cite{BursteddeGhattasGurnisEtAl08,RudiMalossiIsaacEtAl15}.  This
leads to a wide range of spatial scales since small localized features
at plate boundaries of size $\LandauO(\SI{1}{\kilo\meter})$ influence
plate motion at continental scales of
$\LandauO(\SI{1000}{\kilo\meter})$.
The complex character of the flow presents severe computational
challenges for iterative solvers due to poor conditioning of
the linear systems that arise.

Since we focus here on preconditioning linearized Stokes-like problems
that arise at each step of a nonlinear solver, we can simplify our
problem setup by
taking the viscosity to be independent of the strain rate,
but otherwise exhibiting severe spatial heterogeneity.
Given a bounded domain
  $\Omega\subset\R^d$, $d\in\{2,3\}$,
right-hand side forcing
  $\rhsvel(\bx)$,
and spatially-varying viscosity
  $\visc(\bx)\ge \viscmin > 0$ for all $\bx\in\Omega$,
we consider the incompressible Stokes equations
with homogeneous Dirichlet boundary conditions
\begin{subequations}
\label{eq:stokes}
\begin{alignat}{2}
  \label{eq:momentum}
  - \divergence
  \bigl[ \visc(\bx) \, (\gradient\vel + \gradient\vel\transpose) \bigr]
  + \gradient\press &= \rhsvel
  &&\quad\text{in }\Omega,
  \\
  \label{eq:mass}
  - \divergence\vel &= 0
  &&\quad\text{in }\Omega,
  \\
  \label{eq:dir-bc}
  \vel &= 0
  &&\quad\text{on }\partial\Omega,
\end{alignat}
\end{subequations}
where $\vel$ and $\press$ are the unknown velocity and pressure fields,
respectively.%
\footnote{%
  Note that Newton linearization (as opposed to, e.g., Picard) results
  in a fourth-order anisotropic viscosity tensor, with the anisotropic
  component proportional to the tensor product of the second-order strain
  rate tensor with itself. This anisotropic viscosity tensor also
  appears in adjoint Stokes equations, which are important for inverse
  problems \cite{PetraZhuStadlerEtAl12, IsaacPetraStadlerEtAl15,
    WorthenStadlerPetraEtAl14}. For power-law rheologies that are
  typical of many geophysical flows, the anisotropic component of the
  viscosity tensor is dominated by the isotropic component
  $\visc(\bx)\IdMat$ \cite{IsaacStadlerGhattas15,
    RudiMalossiIsaacEtAl15}. Thus for many forward and inverse modeling
  applications, the performance of our preconditioner for isotropic
  viscosities is indicative of its performance for operators with
  anisotropic viscosity tensors.  Hence we focus on the former in this
  paper.
}

\subsection{Discretization and computational challenges}
\label{sec:discretization}

Discretizing \eqref{eq:stokes} leads to
a linear algebraic system of equations of the form
\begin{equation}
  \label{eq:discrete-stokes}
  \begin{bmatrix}
    \ViscStressMat &\GradMat \\ \DivMat &\ZeroMat
  \end{bmatrix}
  \begin{bmatrix}
    \velvec \\ \pressvec
  \end{bmatrix}
  =
  \begin{bmatrix}
    \rhsvelvec \\ \zerovec
  \end{bmatrix},
\end{equation}
where $\ViscStressMat$, $\DivMat$, and $\GradMat$ are matrices
corresponding to discretizations of the viscous stress, divergence,
and gradient operators, respectively.  The discretization is carried
out by high-order finite elements on (possibly aggressively adaptively
refined) hexahedral meshes with velocity--pressure pairings
\discrQPdisc{k}{k-1} of polynomial order $k\ge2$ with a continuous,
nodal velocity approximation \discrQ{k} and a discontinuous, modal
pressure approximation \discrPdisc{k-1}.  These pairings yield optimal
asymptotic convergence rates of the finite element approximation to
the infinite-dimensional solution with decreasing mesh element size,
are inf-sup stable on
general, non-conforming hexahedral meshes with ``hanging nodes,''
and have the advantage of preserving mass locally at the element level due to
the discontinuous pressure \cite{ElmanSilvesterWathen14, HeuvelineSchieweck07,
StenbergSuri96}.
While these properties have been recognized to be important for geophysics
applications (e.g., see~\cite{MayBrownLePourhiet14, MayBrownPourhiet15}),
the high-order discretization, adaptivity, and discontinuous pressure
approximation present significant additional difficulties for
iterative solvers (relative to low order, uniform grid, continuous
discretizations). Finally, a number of frontier geophysical problems,
such as global mantle convection with plate boundary-resolving meshes
and continental ice sheet models with grounding line-resolving meshes,
result in billions of degrees of freedom, demanding efficient
execution and scalability on
leading edge supercomputers
\cite{BursteddeGhattasGurnisEtAl08,
RudiMalossiIsaacEtAl15, StadlerGurnisBursteddeEtAl10}.

The applications we target (such as global mantle convection) exhibit
all of the difficulties described above (severe heterogeneity, very
large scale, need for aggressively-adapted meshes, need for high
order, mass-conserving discretization) and thus demand robust
and effective preconditioners for~\eqref{eq:discrete-stokes},
resulting in iterative solvers with optimal (or nearly optimal)
algorithmic and parallel scalability. This paper describes the design
of such a preconditioner and its analysis and performance evaluation
for problems with highly heterogeneous viscosity. The
preconditioner---{which we call \em weighted BFBT} (\wBFBT)---is of
Schur complement type, and we study its robustness as well as its
algorithmic and parallel scalability.

\subsection{Iterative methods and Schur complement approximations}
\label{sec:schur-approx}

An effective approximation of the Schur\footnote{%
  Strictly speaking, our definition is the negative Schur
  complement. However, as in
  \cite{ElmanSilvesterWathen14}, we prefer to work with
  positive-definite operators and thus define the Schur complement to
  be positive rather than negative definite.} complement
$\SchurMat\coloneqq\DivMat\ViscStressMat^{-1}\GradMat$ is an essential
ingredient for attaining fast convergence of Schur complement-based
iterative solvers for \eqref{eq:discrete-stokes}.
More precisely, a sufficiently good approximation of the inverse Schur
complement
  $\SchurPcMat^{-1}\approx\SchurMat^{-1}$
is sought, which, together with an approximation of the
inverse viscous block,
  $\ViscStressPcMat^{-1}\approx\ViscStressMat^{-1}$,
is used in an iterative scheme with right preconditioning based on an
upper triangular block matrix:
\begin{equation}
  \label{eq:precond-stokes}
  \begin{bmatrix}
    \ViscStressMat &\GradMat \\ \DivMat &\ZeroMat
  \end{bmatrix}
  \begin{bmatrix}
    \ViscStressPcMat &\GradMat \\ \ZeroMat &\SchurPcMat
  \end{bmatrix}^{-1}
  \begin{bmatrix}
    \tilde\velvec \\ \tilde\pressvec
  \end{bmatrix}
  =
  \begin{bmatrix}
    \rhsvelvec \\ \zerovec
  \end{bmatrix}.
\end{equation}
Note that the original solution to \eqref{eq:discrete-stokes} is recovered by
applying the preconditioner once to the solution of \eqref{eq:precond-stokes}.
For the preconditioned Stokes system~\eqref{eq:precond-stokes}, we use GMRES as
the Krylov subspace solver.
This particular combination of Krylov method and preconditioner is known to
converge in just two iterations for
exact
choices of $\ViscStressPcMat^{-1}$
and $\SchurPcMat^{-1}$ \cite{BenziGolubLiesen05}.

The most widely used
approximation of the Schur complement (for variable viscosity Stokes systems)
is the inverse viscosity-weighted mass matrix of the pressure space
\cite{BursteddeStadlerAlisicEtAl13, IsaacStadlerGhattas15,
KronbichlerHeisterBangerth12, MayBrownPourhiet15},
denoted by
  $\PressMassMat(1/\visc)$,
with entries
  $\left[\PressMassMat(1/\visc)\right]_{i,j} =
    \int_\Omega \presstest_i(\bx) \, \presstest_j(\bx) / \visc(\bx) \intd\bx$,
where
  $\presstest_i,\presstest_j \in \discrPdisc{k-1}$
are global basis functions of the finite dimensional space \discrPdisc{k-1}.
Since the basis functions of \discrPdisc{k-1} are modal
and not orthogonal to each other, the mass matrix is not diagonal, and thus
$\PressMassMat(1/\visc)$ is typically diagonalized to further simplify its
inversion.  One common way to obtain a diagonalized version is mass lumping.
For nodal discretizations, the corresponding diagonal elements are
computed by summation of the entries of each matrix
row, i.e., $\PressMassMat(1/\visc)\onevec$, where $\onevec$ is the vector with
ones in all entries.
For modal discretizations,
we generalize the lumping procedure by using
the coefficient vector, $\onevec_{\{\presstest_i\}_{i}}$, representing
the constant function having value $1$
in the associated basis $\{\presstest_i\}_{i}$, i.e.,
\begin{equation}
  \label{eq:lumping}
  \PressMassLumpMat(1/\visc) \coloneqq
    \diag(\PressMassMat(1/\visc)\onevec_{\{\presstest_i\}_{i}}).
\end{equation}
Provided that $\visc$ is sufficiently smooth, \schurMass\ can be an effective
approximation of $\SchurMat$ in numerical experiments
\cite{BursteddeGhattasStadlerEtAl09} and spectral
equivalence can be shown \cite{GrinevichOlshanskii09}.
However, it has been observed in applications with highly heterogeneous
viscosities $\visc$ (e.g., mantle convection
\cite{MayMoresi08,RudiMalossiIsaacEtAl15}) that convergence slows down
significantly due to a poor Schur complement approximation by \schurMass.
Therefore, we propose a new approximation, \wBFBT, that remains robust
when \schurMass\ fails.

Preconditioners based on BFBT approximations for the Schur complement
were initially proposed in
\cite{Elman99} %
for the Navier--Stokes equations.
Over the years, these ideas were refined and extended
\cite{SilvesterElmanKayEtAl01,KayLoghinWathen02,ElmanHowleShadidEtAl06,
  ElmanHowleShadidEtAl08,ElmanTuminaro09}
to arrive at a class of closely related Schur complement approximations:
Pressure Convection--Diffusion, BFBT, and Least Squares Commutator.
The underlying principle, now in a Stokes setting, is that one seeks a
commutator matrix
  $\mat{X}$
such that the following commutator nearly vanishes,
\begin{equation}
  \label{eq:discr-commutator}
    \ViscStressMat\BfbtScalRightMat^{-1}\GradMat - \GradMat\mat{X}
  \approx
    \ZeroMat,
\end{equation}
for a given diagonal matrix $\BfbtScalRightMat^{-1}$.
The Navier--Stokes case differs from Stokes in that the viscous stress
matrix $\ViscStressMat$ contains an additional convection term.  The
motivation for seeking a near-commutator $\mat{X}$ is
that~\eqref{eq:discr-commutator} can be rearranged by multiplying
\eqref{eq:discr-commutator} with $\DivMat\ViscStressMat^{-1}$ from
the left and, provided the inverse exists, with $\mat{X}^{-1}$ from the right
to obtain
  $\SchurMat %
    \approx
    \DivMat\BfbtScalRightMat^{-1}\GradMat\mat{X}^{-1}$,
where the closer
the commutator is to zero, the more accurate the approximation
\cite{ElmanSilvesterWathen14}.
The goal of finding a vanishing commutator can be recast as solving the
following least-squares minimization problem:
\begin{equation}
  \label{eq:lsc-min}
  \text{Find matrix } \mat{X} \text{ minimizing}\quad
    \norm{
      \ViscStressMat\BfbtScalRightMat^{-1}\GradMat \unitvec_j -
      \GradMat\mat{X} \unitvec_j
    }^2_{\BfbtScalLeftMat^{-1}}
  \quad\text{for all } j,
\end{equation}
where
  $\unitvec_j$
is the $j$-th Cartesian unit vector and
the norm arises from a symmetric and positive definite matrix
  $\BfbtScalLeftMat$.
The solution is given by
  $\mat{X} =
    (\DivMat\BfbtScalLeftMat^{-1}\GradMat)^{-1}
    (\DivMat\BfbtScalLeftMat^{-1}\ViscStressMat\BfbtScalRightMat^{-1}\GradMat)$.
Then the BFBT approximation of the inverse Schur complement is derived by
algebraic rearrangement of the commutator \eqref{eq:discr-commutator}:
\begin{equation}
  \label{eq:bfbt}
  \SchurPcMat_{\BFBT}^{-1} \coloneqq
    \left(\DivMat\BfbtScalLeftMat^{-1}\GradMat\right)^{-1}
    \left(
      \DivMat\BfbtScalLeftMat^{-1}\ViscStressMat\BfbtScalRightMat^{-1}\GradMat
    \right)
    \left(\DivMat\BfbtScalRightMat^{-1}\GradMat\right)^{-1}.
\end{equation}
In the literature cited above (which addresses preconditioning of
Navier--Stokes equations with constant viscosity), the diagonal weighting
matrices are chosen as
  $\BfbtScalLeftMat = \BfbtScalRightMat = \VelMassLumpMat$,
i.e., a diagonalized version of the velocity space mass matrix; hence we call
this the \massBFBT\ approximation of the Schur complement.
\massBFBT\ can be used for Stokes problems with constant viscosities
providing convergence similar to \schurMass.
However, the computational cost of applying~\eqref{eq:bfbt} is significantly
higher than the (cheap) application of a possibly diagonalized inverse of
\schurMass.
Moreover, \massBFBT\ is not an option for heterogeneous viscosities because
convergence becomes extremely slow or stagnates, as observed
in~\cite{MayMoresi08}.
Instead, in~\cite{MayMoresi08} for finite element and
in~\cite{FuruichiMayTackley11} for staggered grid finite difference
discretizations, a re-scaling of the discrete Stokes system~\eqref{eq:stokes}
was performed, which essentially alters the diagonal weighting matrices
$\BfbtScalLeftMat$, $\BfbtScalRightMat$.
By choosing entries from $\ViscStressMat$ for these weighting matrices,
it was possible to demonstrate improved convergence with \BFBT\ compared
to \schurMass\ for certain benchmark problems with strong viscosity variations.
Building on ideas from~\cite{MayMoresi08},
\cite{RudiMalossiIsaacEtAl15} chose the weighting matrices such that
$\BfbtScalLeftMat = \BfbtScalRightMat = \diag(\ViscStressMat)$, which
led to superior performance compared to \schurMass\ for highly
heterogeneous mantle convection problems.  Hence we refer to this
approach as \diagBFBT.

However, even \diagBFBT\ can
fail to achieve fast convergence for
some problems/discretizations, as shown below (\cref{sec:comparison}).
Moreover, choosing the weighting matrices as $\diag(\ViscStressMat)$
is problematic for high-order
discretizations, where $\diag(\ViscStressMat)$ becomes a poor
approximation of $\ViscStressMat$.
These drawbacks
lead us to propose the following \wBFBT\ approximation for the inverse Schur
complement:
\begin{equation}
  \label{eq:wbfbt}
  \SchurPcMat_{\wBFBT}^{-1} \coloneqq
    \left(\DivMat\WbfbtScalLeftMat^{-1}\GradMat\right)^{-1}
    \left(
      \DivMat\WbfbtScalLeftMat^{-1}\ViscStressMat\WbfbtScalRightMat^{-1}\GradMat
    \right)
    \left(\DivMat\WbfbtScalRightMat^{-1}\GradMat\right)^{-1},
\end{equation}
where
  $\WbfbtScalLeftMat = \VelMassLumpMat(\bfbtweightleft)$ and
  $\WbfbtScalRightMat = \VelMassLumpMat(\bfbtweightright)$
are lumped velocity space mass matrices (lumping analogously to
\eqref{eq:lumping})
that are weighted by the square root of the viscosity,
  $\bfbtweightleft(\bx) = \sqrt{\visc(\bx)} = \bfbtweightright(\bx)$,
$\bx\in\Omega$.

\subsection{Outline and summary of key results}
\label{sec:outline}

\begin{figure}\centering
  \begin{tikzpicture}[baseline={(current bounding box.center)}]
    \tikzset{
      graph/.style={
        color=#1, line width=2pt},
    }
    \pgfplotsset{
      commonaxis/.style={
        compat=newest,
        width=8.6cm,
        height=5.4cm,
        grid=major,
        tick label style={font=\footnotesize},
        title style={align=center, font=\footnotesize},
        xlabel style={align=center, font=\footnotesize},
        ylabel style={align=center, font=\footnotesize},
        legend style={at={(0.02,0.96)}, anchor=north west, font=\footnotesize},
        legend cell align=left},
      sinkeraxis/.style={
        xmin=0,
        xmax=28,
        ymin=0,
        ymax=700,
        xtick={0,4,8,12,16,20,24,28},
        ytick={0,100,200,300,400,500,600,700} },
      sinkeraxisscale/.style={
        /pgfplots/xmode=linear,
        /pgfplots/ymode=linear},
      sinkeraxisscale/.belongs to family=/pgfplots/scale,
    }
    \def\colorA{jr@div-medred!90!jr@div-darkred}
    \def\colorB{jr@div-medblue!40!jr@div-darkblue}
    \def\markA{diamond*}
    \def\markB{*}
    \def\datadir{graphics/plotdata/classification}

    \def\dataA{\datadir/Tvary_viscDrMagn8_schurMass.txt}
    \def\dataB{\datadir/Tvary_viscDrMagn8_schurBFBT23.txt}
    \begin{axis}[
      commonaxis, sinkeraxis, sinkeraxisscale,
      title={},
      xlabel={Problem difficulty (number of sinkers) $\rightarrow$},
      ylabel={Number of GMRES iterations},
    ]
      \addplot[graph=\colorA,mark=\markA] table [x=sinkers,y=iter] {\dataA};
      \addplot[graph=\colorB,mark=\markB] table [x=sinkers,y=iter] {\dataB};
      \legend{\schurMass, \wBFBT};
    \end{axis}
  \end{tikzpicture}
  \hspace{2em} 
  \begin{adjustbox}{valign=c}
    \includegraphics[width=4.4cm]{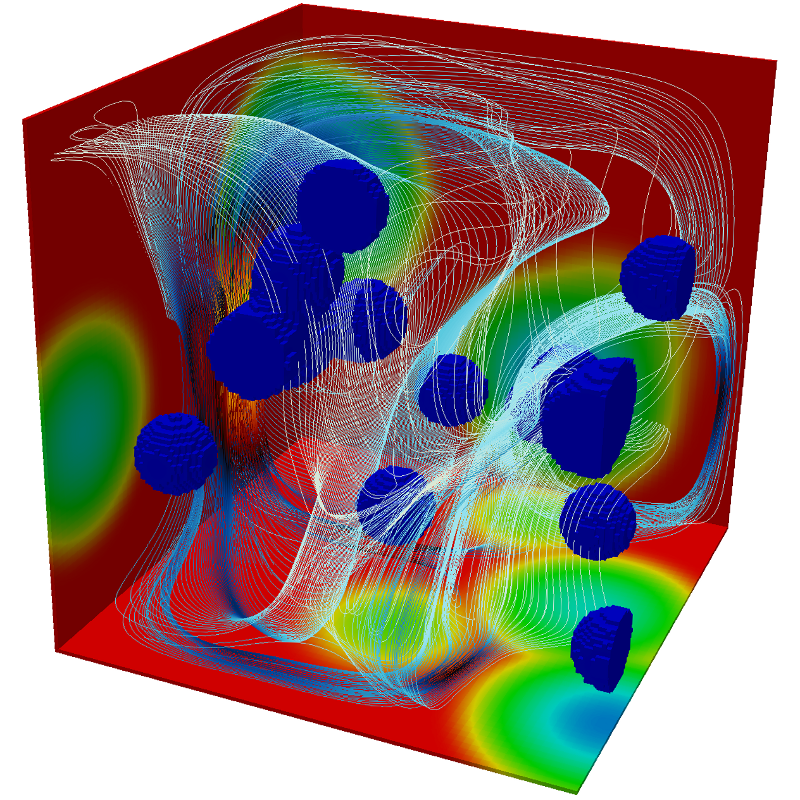}
  \end{adjustbox}
  \caption{%
    \infigure{Left image} shows the improvement in convergence obtained with
    the proposed \wBFBT\ preconditioner over
    a preconditioner using the inverse viscosity-weighted pressure
    mass matrix as Schur
    complement approximation.
    The number of randomly placed sinkers
    (high viscosity inclusions in low viscosity medium) increases
    along the horizontal axis. The vertical axis depicts
    the number of GMRES iterations required for $10^{6}$ residual reduction
    for the
    most popular
    \schurMass\ and the proposed \wBFBT\ preconditioner.
    Fixed problem parameters are the dynamic ratio
    $\dynamicratio(\visc)=\max(\visc)/\!\min(\visc)=10^8$,
    discretization order $k=2$, and the mesh refinement level
    $\ell=7$, resulting in $128^3$ finite elements.
    \infigure{Right image} shows an example viscosity field with 16 sinkers
    (\infigure{blue spheres} depict highly viscous centers of Gaussian-like
    sinkers, low viscosity medium in \infigure{red color}) and the
    \infigure{streamlines} of the computed velocity field.
  }
  \label{fig:conv-improvement}
\end{figure}

After defining a class of benchmark problems (\cref{sec:benchmark}), we compare
the convergence obtained with different Schur complement approximations to
motivate preconditioning with \wBFBT\ (\cref{sec:comparison}).
Theoretical estimates for spectral equivalence of \wBFBT\ are derived in
\cref{sec:theory}.
This is followed by a detailed numerical study showing when \wBFBT\ is
advantageous over \schurMass\ (\cref{sec:robustness}),
and a discussion of boundary modifications for \wBFBT\ that accelerate
convergence (\cref{sec:boundary-numerics}).
In \cref{sec:hmg} we describe an algorithm for \wBFBT-based
Stokes preconditioning, which uses hybrid spectral-geometric-algebraic
multigrid (HMG). Finally, in \cref{sec:scalability}, we provide numerical evidence
for near-optimal algorithmic and parallel scalability. In particular,
we demonstrate that the preconditioner's parallel efficiency
remains high when weak scaling out to tens of thousands of threads
and even millions of threads.%
\footnote{%
  See \cite{RudiMalossiIsaacEtAl15}, which demonstrated parallel scalability
  for a \BFBT-type method.  \wBFBT\ deviates from \cite{RudiMalossiIsaacEtAl15}
  because of a different choice of diagonal weighting matrices in
  \eqref{eq:bfbt}.  However, the work per application of both preconditioners
  is the same, resulting in comparable parallel scalability.
}

To motivate our study of \wBFBT, we give an example for a possible improvement
in convergence in \cref{fig:conv-improvement}.  There, a comparison is drawn
between the \schurMass\ and \wBFBT\ approximations for the Schur complement.
The Stokes problem that is being solved is the multi-sinker benchmark problem
from \cref{sec:benchmark}.  The difficulty of the problem can be increased by
adding more and more high-viscosity inclusions, called sinkers, into a
low-viscosity background medium, which, as a result, introduces more variation
in the viscosity.
As can be seen in the figure, the number of GMRES iterations remain flat when
preconditioning with \wBFBT, whereas the number of GMRES iterations increases
significantly for higher sinker counts when using \schurMass, rendering
\schurMass\ inefficient for these types of difficult problems.  Therefore we
propose \wBFBT\ as an alternative Schur complement approximation for Stokes
flow problems with a highly varying viscosity.

\section{Benchmark problem and comparison of Schur complement approximations}
\label{sec:benchmark-and-comparison}

This section further motivates the need for more effective Schur complement
preconditioners.  We first present a class of benchmark problems that
range from relatively mild viscosity variations to highly heterogeneous.
Then a challenging problem is used to compare Stokes solver convergence with
different Schur complement approximations to demonstrate the limitations of
established methods and motivate the development of \wBFBT.

\subsection{Multi-sinker benchmark problem}
\label{sec:benchmark}

The design of suitable benchmark problems is critical to conduct studies that
can give useful convergence estimates for challenging applications.
We seek complex geometrical structures in the viscosity that generate
irregular, nonlocal, multiscale flow fields.  Additionally, the viscosity
should exhibit sharp gradients and its dynamic ratio
  $\dynamicratio(\visc) \coloneqq \max(\visc)/\!\min(\visc)$
(also commonly referred to as viscosity contrast) can be six orders of
magnitude or higher in demanding applications.
As in \cite{MayBrownLePourhiet14}, we use a multi-sinker test problem
with randomly positioned inclusions (e.g., as in
\cref{fig:conv-improvement}, \infigure{right image}) %
to study solver performance. We find that the arising viscosity
structure is a suitable test for challenging, highly heterogeneous
coefficient Stokes problems, and that the solver performance observed
for such models can be indicative of the performance for other
challenging applications.

In the (open) unit cube domain
  $\Omega = (0,1)^3$,
we define the viscosity coefficient
  $\visc(\bx) \in [\viscmin,\viscmax]$,
  $\bx\in\Omega$,
  $0<\viscmin<\viscmax<\infty$,
with dynamic ratio
  $\dynamicratio(\visc) = \viscmax/\viscmin$
by means of rescaling a $C^{\infty}$ indicator function
  $\sinkerind_{n}(\bx) \in [0,1]$
that accumulates $n$ sinkers via a product of modified Gaussian functions:
\begin{align*}
  \visc(\bx) &\coloneqq
    (\viscmax - \viscmin) (1 - \sinkerind_{n}(\bx)) + \viscmin,
  \quad\bx\in\Omega,
  \\
  \sinkerind_{n}(\bx) &\coloneqq
    \prod_{i=1}^{n}
      1 - \exp\left( \!\!-\sinkerdecay
                     \max\left(0, \abs{\boldvar{c}_i - \bx} -
                                  \frac{\sinkerwidth}{2}\right)^2
              \right),
  \quad\bx\in\Omega,
\end{align*}
where
  $\boldvar{c}_i\in\Omega$, $i=1,\ldots,n$,
are the centers of the sinkers,
  $\sinkerdecay>0$
controls the exponential decay of the Gaussian smoothing, and
  $\sinkerwidth\ge0$
is the diameter of a sinker where $\viscmax$ is attained.
Since all sinkers are equal in size, inserting more of them inside the domain
will eventually result in overlapping with each other and possible
intersections with the domain's boundary.
Throughout the paper, we fix
  $\sinkerdecay=200$,
  $\sinkerwidth=0.1$,
and use the same set of precomputed random points
  $\boldvar{c}_i$
in all numerical experiments.  Two parameters are varied:
(i) the number of sinkers $n$ at random positions (the label S$n$-rand
indicates a multi-sinker problem with $n$ randomly positioned sinkers)
and
(ii) the dynamic ratio $\dynamicratio(\visc)$ which in turn determines
  $\viscmin\coloneqq\dynamicratio(\visc)^{-1/2}$
and
  $\viscmax\coloneqq\dynamicratio(\visc)^{1/2}$.
The right-hand side of~\eqref{eq:discrete-stokes},
  $\rhsvel(\bx) \coloneqq \left(0, 0, \beta(\sinkerind_{n}(\bx) - 1)\right)$,
  $\beta=10$ constant,
is such that it forces the high-viscosity sinkers downward,
similarly to a gravity that pulls on high-density inclusions within a medium of
lower density.

\subsection{Comparison of Schur complement approximations}
\label{sec:comparison}

\begin{figure}\centering
  \begin{tikzpicture}
    \tikzset{
      convgraph/.style={
        color=#1,
        line width=2pt},
    }
    \pgfplotsset{
      convaxis/.style={
        compat=newest,
        width=4.1cm,
        height=3.7cm,
        xmin=0,
        xmax=100,
        ymin=1e-6,
        ymax=5e+0,
        xtick={0,25,50,75,100},
        ytick={1e0,1e-2,1e-4,1e-6},
        grid=major,
        tick label style={font=\footnotesize},
        title style={align=center, font=\footnotesize, yshift=-0.2cm},
        xlabel style={align=center, font=\footnotesize},
        ylabel style={align=center, font=\footnotesize},
        legend style={at={(1.15,1.0)}, anchor=north west, font=\footnotesize} },
    }
    \pgfplotsset{
      convaxisscale/.style={
        /pgfplots/xmode=linear,
        /pgfplots/ymode=log},
      convaxisscale/.belongs to family=/pgfplots/scale,
    }

    \def\colorA{jr@red}
    \def\colorB{jr@purple}
    \def\colorC{jr@blue}
    \def\colorD{jr@green}
    \def\datadir{graphics/plotdata/comparison_levelVary}

    \def\dataA{\datadir/T16rand_viscDrMagn8_schurMass_k2_l4.txt}
    \def\dataB{\datadir/T16rand_viscDrMagn8_schurMass_k2_l5.txt}
    \def\dataC{\datadir/T16rand_viscDrMagn8_schurMass_k2_l6.txt}
    \def\dataD{\datadir/T16rand_viscDrMagn8_schurMass_k2_l7.txt}
    \begin{axis}[
      name=levelVary-S16rand-schurMass,
      convaxis, convaxisscale,
      title={\schurMass},
      ylabel={fixed order $k=2$ \\[4ex]
              $\norm{\text{res}}/\norm{\text{init res}}$},
    ]
      \addplot[convgraph=\colorB] table [x=iter,y=residual] {\dataB};
      \addplot[convgraph=\colorC] table [x=iter,y=residual] {\dataC};
      \addplot[convgraph=\colorD] table [x=iter,y=residual] {\dataD};
    \end{axis}

    \def\dataA{\datadir/T16rand_viscDrMagn8_schurBFBT40_bdrAmp1-1_k2_l4.txt}
    \def\dataB{\datadir/T16rand_viscDrMagn8_schurBFBT40_bdrAmp1-1_k2_l5.txt}
    \def\dataC{\datadir/T16rand_viscDrMagn8_schurBFBT40_bdrAmp1-1_k2_l6.txt}
    \def\dataD{\datadir/T16rand_viscDrMagn8_schurBFBT40_bdrAmp1-1_k2_l7.txt}
    \begin{axis}[
      at=(levelVary-S16rand-schurMass.right of south east),
      anchor=left of south west,
      name=levelVary-S16rand-schurBFBT40,
      convaxis, convaxisscale,
      title={\diagBFBT},
      ymajorticks=false,
    ]
      \addplot[convgraph=\colorB] table [x=iter,y=residual] {\dataB};
      \addplot[convgraph=\colorC] table [x=iter,y=residual] {\dataC};
      \addplot[convgraph=\colorD] table [x=iter,y=residual] {\dataD};
    \end{axis}

    \def\dataA{\datadir/T16rand_viscDrMagn8_schurBFBT23_bdrAmp1-1_k2_l4.txt}
    \def\dataB{\datadir/T16rand_viscDrMagn8_schurBFBT23_bdrAmp1-1_k2_l5.txt}
    \def\dataC{\datadir/T16rand_viscDrMagn8_schurBFBT23_bdrAmp1-1_k2_l6.txt}
    \def\dataD{\datadir/T16rand_viscDrMagn8_schurBFBT23_bdrAmp1-1_k2_l7.txt}
    \begin{axis}[
      at=(levelVary-S16rand-schurBFBT40.right of south east),
      anchor=left of south west,
      name=levelVary-S16rand-schurBFBT23,
      convaxis, convaxisscale,
      title={\wBFBT},
      ymajorticks=false,
    ]
      \addplot[convgraph=\colorB] table [x=iter,y=residual] {\dataB};
      \addplot[convgraph=\colorC] table [x=iter,y=residual] {\dataC};
      \addplot[convgraph=\colorD] table [x=iter,y=residual] {\dataD};
      \legend{$\ell=5$, $\ell=6$, $\ell=7$};
    \end{axis}

    \def\colorA{jr@purple}
    \def\colorB{jr@pink!75}
    \def\colorC{jr@orange}
    \def\colorD{jr@brown}
    \def\datadir{graphics/plotdata/comparison_orderVary}

    \def\dataA{\datadir/T16rand_viscDrMagn8_schurMass_k2_l5.txt}
    \def\dataB{\datadir/T16rand_viscDrMagn8_schurMass_k3_l5.txt}
    \def\dataC{\datadir/T16rand_viscDrMagn8_schurMass_k4_l5.txt}
    \def\dataD{\datadir/T16rand_viscDrMagn8_schurMass_k5_l5.txt}
    \begin{axis}[
      at=(levelVary-S16rand-schurMass.left of south west),
      anchor=left of north west,
      yshift=-0.6cm,
      name=orderVary-S16rand-schurMass,
      convaxis, convaxisscale,
      xlabel={GMRES iteration},
      ylabel={fixed level $\ell=5$ \\[4ex]
              $\norm{\text{res}}/\norm{\text{init res}}$},
    ]
      \addplot[convgraph=\colorA] table [x=iter,y=residual] {\dataA};
      \addplot[convgraph=\colorB] table [x=iter,y=residual] {\dataB};
      \addplot[convgraph=\colorC] table [x=iter,y=residual] {\dataC};
      \addplot[convgraph=\colorD] table [x=iter,y=residual] {\dataD};
    \end{axis}

    \def\dataA{\datadir/T16rand_viscDrMagn8_schurBFBT40_bdrAmp1-1_k2_l5.txt}
    \def\dataB{\datadir/T16rand_viscDrMagn8_schurBFBT40_bdrAmp1-1_k3_l5.txt}
    \def\dataC{\datadir/T16rand_viscDrMagn8_schurBFBT40_bdrAmp1-1_k4_l5.txt}
    \def\dataD{\datadir/T16rand_viscDrMagn8_schurBFBT40_bdrAmp1-1_k5_l5.txt}
    \begin{axis}[
      at=(orderVary-S16rand-schurMass.right of south east),
      anchor=left of south west,
      name=orderVary-S16rand-schurBFBT40,
      convaxis, convaxisscale,
      xlabel={GMRES iteration},
      ymajorticks=false,
    ]
      \addplot[convgraph=\colorA] table [x=iter,y=residual] {\dataA};
      \addplot[convgraph=\colorB] table [x=iter,y=residual] {\dataB};
      \addplot[convgraph=\colorC] table [x=iter,y=residual] {\dataC};
      \addplot[convgraph=\colorD] table [x=iter,y=residual] {\dataD};
    \end{axis}

    \def\dataA{\datadir/T16rand_viscDrMagn8_schurBFBT23_bdrAmp1-1_k2_l5.txt}
    \def\dataB{\datadir/T16rand_viscDrMagn8_schurBFBT23_bdrAmp1-1_k3_l5.txt}
    \def\dataC{\datadir/T16rand_viscDrMagn8_schurBFBT23_bdrAmp1-1_k4_l5.txt}
    \def\dataD{\datadir/T16rand_viscDrMagn8_schurBFBT23_bdrAmp1-1_k5_l5.txt}
    \begin{axis}[
      at=(orderVary-S16rand-schurBFBT40.right of south east),
      anchor=left of south west,
      name=orderVary-S16rand-schurBFBT23,
      convaxis, convaxisscale,
      xlabel={GMRES iteration},
      ymajorticks=false,
    ]
      \addplot[convgraph=\colorA] table [x=iter,y=residual] {\dataA};
      \addplot[convgraph=\colorB] table [x=iter,y=residual] {\dataB};
      \addplot[convgraph=\colorC] table [x=iter,y=residual] {\dataC};
      \addplot[convgraph=\colorD] table [x=iter,y=residual] {\dataD};
      \legend{$k=2$, $k=3$, $k=4$, $k=5$};
    \end{axis}
  \end{tikzpicture}
  \caption{%
    Comparison of Stokes solver convergence with
    \schurMass\ \infigure{(left column)},
    \diagBFBT\ \infigure{(middle column)}, and
    \wBFBT\ \infigure{(right column)} preconditioning.
    We fix the problem S16-rand, $\dynamicratio(\visc)=10^8$ while varying
    mesh refinement level $\ell$ \infigure{(top row)} and discretization order
    $k$ \infigure{(bottom row)}.
    This comparison shows that \wBFBT\ combines robust convergence of
    \diagBFBT\ with improved algorithmic scalability when $k$ is increased.
  }
  \label{fig:comparison}
\end{figure}

We compare convergence of the Stokes solver using the Schur complement
approximation \schurMass\ with \diagBFBT\ and with the proposed
\wBFBT.  The problem parameters are held fixed to S16-rand and
$\dynamicratio(\visc)=10^8$.  The numerical experiments are carried
out using different levels of mesh refinement $\ell=5,\ldots,7$ (for
fixed order $k=2$) and different discretization orders $k=2,\ldots,5$
(for fixed level $\ell=5$).  A level $\ell$ corresponds to a mesh of
$2^{3\ell}$ elements due to uniform refinement.
Note that for these tests, the applications of $\ViscStressMat^{-1}$,
$\left(\DivMat\WbfbtScalLeftMat^{-1}\GradMat\right)^{-1}$, and
$\left(\DivMat\WbfbtScalRightMat^{-1}\GradMat\right)^{-1}$
are approximated using a multigrid method (introduced in
\cref{sec:hmg}). These approximations are
sufficiently accurate, such that the comparison is indicative of the
effectiveness of the different Schur complement approximations. In
particular, improving the approximation does not
change the results, which
are presented in \cref{fig:comparison}.  In the
\infigure{left} two plots, the poor Schur complement approximation by
\schurMass\ for this problem setup can be observed clearly.
Convergence stagnates %
(similar results are found
in~\cite{MayBrownLePourhiet14,RudiMalossiIsaacEtAl15}).

Preconditioner \diagBFBT\ (\cref{fig:comparison}, \infigure{middle}) is able to
achieve fast convergence for discretization order $k=2$.
A limitation of \diagBFBT\ is a strong dependence on the order $k$.
This can be explained by the decreasing diagonal dominance in
the viscous block $\ViscStressMat$ with increasing order $k$: for higher
$k$ the approximation of $\ViscStressMat$ by $\diag(\ViscStressMat)$
deteriorates.
Note that numerical experiments with \massBFBT\ are not presented, because it
performs poorly in the presence of spatially-varying viscosities.  This leads
to the conclusion that the choice of the weighting matrices
$\BfbtScalLeftMat$, $\BfbtScalRightMat$ in $\SchurPcMat_{\BFBT}^{-1}$
crucially affects the quality of the Schur complement approximation.

The \wBFBT\ approximation delivers convergence that is nearly as fast
as in the \diagBFBT, $k=2$ case, but without the severe deterioration when
$k$ is increased (see \cref{fig:comparison}, \infigure{right}).  Thus, \wBFBT\
exhibits the robustness of \diagBFBT\ and additionally shows superior
algorithmic scalability with respect to~$k$.
Having illustrated the efficacy of \wBFBT\ for certain problem
parameters, we next establish spectral equivalence of
\wBFBT\ (\cref{sec:theory}) and then analyze in more
detail how crucial parameters influence convergence in
\cref{sec:robustness}.

\section{Spectral equivalence of \wBFBT}
\label{sec:theory}

Before we show spectral equivalence, we introduce notation and basic
definitions.

\subsection{Basic definitions}
\label{sec:theory-intro}

Let
  $\Omega\subset\R^d$
be a bounded domain with Lipschitz boundary
  $\partial\Omega$.
We denote as
  $\LL(\Omega)$
the class of real-valued square-integrable functions,
equipped with the usual $\LL$-inner product
$  \innerprod{u}{v}_{\LL(\Omega)}$
and induced norm
  $\norm{u}_{\LL(\Omega)}$, $u,v \in\LL(\Omega)$.
We also consider corresponding spaces of $d$-dimensional vector-valued
functions and
$(d \times d)$-dimensional tensor-valued functions with component-wise
multiplication, denoted by
  $\LL(\Omega)^{d}$
and
  $\LL(\Omega)^{d \times d}$.
The subspace of $\LL(\Omega)$ that does not contain constant functions
is denoted by
 $\FncSpPressMean$.
A bounded function, say $\alpha=\alpha(\bx)$, belongs to the space
  $\Lebesgue{\infty}(\Omega)$
by satisfying the following finite norm:
  $\norm{\alpha}_{\Lebesgue{\infty}(\Omega)} \coloneqq
    \esssup_{\bx\in\Omega} \abs{\alpha(\bx)} < \infty$.
We generalize the $\LL$-norms to classes of weighted $\LL_{\alpha}$-norms for
functions
  $f \in \LL(\Omega)^n$,
  $n \in \{1, d, d\times d\}$,
defined by
\begin{equation*}
  \norm{f}_{\LL_{\alpha}(\Omega)^n} \coloneqq
    \norm{\alpha^{\frac12} f}_{\LL(\Omega)^n}
  \quad\text{for }
    \alpha \in \Linf(\Omega) \text{, }
    0 < \alpha(\bx) \text{ a.e.\ in } \Omega.
\end{equation*}
Next, we introduce
  $\Sobolev{m}(\Omega)$,
with $m \ge 0$, which is the Sobolev space of $m$ derivatives in $\LL(\Omega)$,
and for $m=1$ we use the inner product
$
\innerprod{u}{v}_{\Sobolev{1}(\Omega)} \coloneqq
    \innerprod{u}{v}_{\LL(\Omega)} +
    \innerprod{\gradient u}{\gradient v}_{\LL(\Omega)},
$
inducing the norm
$\norm{u}_{\Sobolev{1}(\Omega)}$.
Functions in $\Sobolev{m}(\Omega)$ with vanishing
trace on the boundary $\partial\Omega$ belong to the space
  $\Sobolev{m}_{0}(\Omega)$.
Finally, we say that a function belongs to the class of
  $C^{\infty}(\Omega)$
if it has partial derivatives of any order in $\Omega$, and these
derivatives are continuous.

We transition from abstract definitions to fluid mechanics.
The differential operators acting on velocity
  $\vel\in\FncSpVel$
and pressure
  $\press\in\FncSpPress$
within the Stokes equations are defined in the sense of distributions:
\begin{equation*}
  \symgradient\vel \coloneqq \frac12 (\gradient\vel + \gradient\vel\transpose),
  \quad
  \DivOp\vel       \coloneqq -\divergence\vel,
  \quad
  \GradOp\press    \coloneqq \gradient\press.
\end{equation*}
Moreover, assume a sufficiently regular, bounded viscosity
  $\visc \in \Sobolev{1}(\Omega) \cap \Lebesgue{\infty}(\Omega)$
such that
  $0 < \viscmin \le \visc(\bx)$ a.e.\ in $\Omega$
and then define the viscous stress tensor
  $\viscstresstensor \coloneqq 2\visc\symgradient\vel$.
We denote the function space for velocity by
\begin{equation}
  \label{eq:fnc-sp-vel-bc}
  V \coloneqq
    \left\{
      \vel \in (\Sobolev{1}(\Omega))^{d} \mid
      \bn\cdot\vel = 0 \text{ on } \partial\Omega
    \right\},
\end{equation}
where $\bn\in\R^d$ is the outward unit normal vector at the boundary
$\partial\Omega$,
and the function space for pressure by
  $Q \coloneqq \FncSpPressMean$,
and we introduce the viscous stress operator with a heterogeneous viscosity
\begin{equation*}
  \ViscStressOp_{\visc} : V \rightarrow V\dual, \quad
  \ViscStressOp_{\visc}\vel \coloneqq
    -\divergence(2\visc\symgradient\vel) =
    -\divergence\viscstresstensor.
\end{equation*}
Given exterior forces acting on the fluid
  $\rhsvel \in V\dual$,
we consider the incompressible Stokes problem with free-slip and
no-normal flow
boundary conditions:
\begin{subequations}
\label{eq:abstr-stokes}
\begin{alignat}{2}
  \label{eq:abstr-momentum}
  \ViscStressOp_{\visc}\vel + \GradOp\press &= \rhsvel
    &&\quad\text{in }\Omega,\\
  \label{eq:abstr-mass}
  \DivOp\vel &= 0
    &&\quad\text{in }\Omega,\\
  \label{eq:abstr-stress-bc}
  \bT\left[ \viscstresstensor - \press\IdMat \right]\bn &= 0
    &&\quad\text{on }\partial\Omega,\\
  \label{eq:abstr-vel-bc}
  \vel\cdot\bn &= 0
    &&\quad\text{on }\partial\Omega,
\end{alignat}
\end{subequations}
in which we seek the velocity
  $\vel \in V$
and pressure
  $\press \in Q$.
On the boundary, we have outward unit normal vectors
  $\bn\in\R^{d}$
and tangential projectors
  $\bT \coloneqq \IdMat - \bn\bn\transpose$.
For the theoretical analysis sections, we altered the boundary conditions
compared to~\eqref{eq:dir-bc} to achieve a cleaner presentation.

For the definition of the \wBFBT\ approximation of the Schur complement, we
introduce a Poisson operator for higher regularity pressure functions
\begin{equation}
  \label{eq:press-poisson-op}
  \PressPoissonOp_{w}\adjoint : \FncSpPressHiReg\rightarrow\FncSpPress, \quad
  \PressPoissonOp_{w}\adjoint\press \coloneqq
    \DivOp w \GradOpExt\press,
\end{equation}
with an appropriate coefficient $w$ (see below) and augmented with homogeneous
Neumann boundary conditions,
  $\bn\cdot\GradOpExt\press = 0$.
The $\LL$-adjoint of $\PressPoissonOp_{w}\adjoint$ is denoted by
  $\PressPoissonOp_{w}$.
Finally, we define the \wBFBT\ approximation of the Schur complement
  $\SchurOp = \DivOp\ViscStressOp_{\visc}^{-1}\GradOp$
by:
\begin{equation}
  \label{eq:wbfbt-op}
  \SchurApproxOpWBFBT \coloneqq
    \PressPoissonOp_{\bfbtweightright}\adjoint
    (\DivOpExt\bfbtweightleft\,\ViscStressOp_{\visc}\,
     \bfbtweightright\GradOpExt)^{-1}
    \PressPoissonOp_{\bfbtweightleft},
\end{equation}
with sufficiently regular, bounded weight functions
  $\bfbtweightleft,\bfbtweightright \in
    \Sobolev{1}(\Omega) \cap \Lebesgue{\infty}(\Omega)$
such that
  $0 < \bfbtweight_{\mathrm{min}} \le
    \bfbtweightleft(\bx), \bfbtweightright(\bx)$
a.e.\ in $\Omega$.
Note that the definitions of the \wBFBT\ weight functions in the
discrete case~\eqref{eq:wbfbt} are reciprocal to definitions
in~\eqref{eq:press-poisson-op} and~\eqref{eq:wbfbt-op}, because in the discrete
case the weight functions were embedded into inverses of mass matrices.

\subsection{Main theorem on spectral equivalence of \wBFBT}
\label{sec:theory-thm}

One measure for the efficacy of a preconditioner consists of the ratio of the
maximal to minimal eigenvalues of the preconditioned system
  $\SchurPcMat_{\wBFBT}^{-1}\SchurMat$.  This section establishes
inequalities for spectral equivalence of
\wBFBT\ by providing bounds on that ratio.  The derivations are carried out in
an infinite-dimensional setting.  We begin by stating the main result of this
section in \cref{thm:main-result} and continue with proving this result using
a sequence of lemmas.

\begin{theorem}[Main result]
  \label{thm:main-result}
  Let
    $\hat{Q} = \FncSpPressMean\cap\Sobolev{1}(\Omega)$.
  If the left and right \wBFBT\ weight functions are equal to
  \begin{equation*}
    \bfbtweightleft(\bx) = \visc(\bx)^{-\frac12} = \bfbtweightright(\bx)
    \quad\text{for a.a. } \bx\in\Omega,
  \end{equation*}
  then the exact Schur complement is equivalent to the \wBFBT\ approximation
  such that
  \begin{equation*}
      \innerprod{\SchurApproxOpWBFBT\,\presstest}{\presstest}
    \le
      \innerprod{\SchurOp\presstest}{\presstest}
    \le
      C_{\wBFBT}
      \innerprod{\SchurApproxOpWBFBT\,\presstest}{\presstest}
    \quad\text{for all } \presstest\in\hat{Q},
  \end{equation*}
  where
  \begin{equation*}
    C_{\wBFBT} \coloneqq
      \Bigl(1 + \frac14 \norm{\gradient\visc}_{\Linf(\Omega)^{d}}^2\Bigr)
      \Bigl(C_{P,\visc}^2 + 1\Bigr)
      C_{K,\visc}^2
  \end{equation*}
  and the constants $C_{P,\visc},C_{K,\visc} > 0$ stem from weighted
  Poincar\'e--Friedrichs' and Korn's inequalities, respectively
  (see Remark~\ref{rmk:weighted-poincare-korn} for more information);
  the viscosity $\visc$ assumes the role of the weight function in the weighted
  inequalities.

  If the viscosity and the \wBFBT\ weight functions are constant,
  \begin{equation*}
    \visc \equiv 1, \quad
    \bfbtweightleft \equiv 1 \equiv \bfbtweightright,
  \end{equation*}
  then the exact Schur complement is equivalent to the \wBFBT\ approximation
  such that
  \begin{equation*}
      \innerprod{\SchurApproxOpWBFBT\,\presstest}{\presstest}
    \le
      \innerprod{\SchurOp\presstest}{\presstest}
    \le
      (C_P^2 + 1) \, C_K^2
      \innerprod{\SchurApproxOpWBFBT\,\presstest}{\presstest}
    \quad\text{for all } \presstest\in\hat{Q}
  \end{equation*}
  with constants $C_P,C_K > 0$ stemming from (the classical)
  Poincar\'e--Friedrichs' and Korn's inequalities, respectively.
\end{theorem}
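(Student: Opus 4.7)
The plan is to recast the claim as a Galerkin subspace comparison. Setting $a_\visc(\vel,\boldsymbol{\psi}) \coloneqq \langle \ViscStressOp_\visc\vel, \boldsymbol{\psi}\rangle = 2\int_\Omega \visc\,\symgradient\vel : \symgradient\boldsymbol{\psi}$, the standard Rayleigh identity for a symmetric Schur complement yields
\[
  \langle \SchurOp\presstest, \presstest\rangle
  =
  \sup_{\boldsymbol{\psi}\in V\setminus\{0\}}
    \frac{|\langle \GradOp\presstest, \boldsymbol{\psi}\rangle|^2}
         {a_\visc(\boldsymbol{\psi}, \boldsymbol{\psi})}.
\]
Unfolding the definition \eqref{eq:wbfbt-op} with $\bfbtweightleft = \bfbtweightright =: w = \visc^{-1/2}$ shows that $\vel_p \coloneqq w\,\GradOpExt\hat p$, where $\hat p$ solves the middle Poisson problem $(\DivOpExt w\,\ViscStressOp_\visc\,w\,\GradOpExt)\hat p = \PressPoissonOp_w \presstest$, is the $a_\visc$-Galerkin projection of the full-space minimiser $\vel^\star \coloneqq \ViscStressOp_\visc^{-1}\GradOp\presstest$ onto the subspace
\[
  V_w \coloneqq \{w\,\GradOpExt r : r\in\hat Q\} \;\subseteq\; V,
\]
and consequently $\langle \SchurApproxOpWBFBT\,\presstest, \presstest\rangle = a_\visc(\vel_p, \vel_p)$ admits the same Rayleigh formula restricted to $V_w$.

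The first (lower) inequality is then immediate: since $V_w \subseteq V$, the supremum over the smaller set cannot exceed the one over the larger set.

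For the second (upper) inequality I would sandwich both quantities between multiples of $\int_\Omega \visc^{-1}|\GradOp\presstest|^2$. The bound $\langle \SchurOp\presstest, \presstest\rangle \le (C_{P,\visc}^2+1)\,C_{K,\visc}^2 \int_\Omega \visc^{-1}|\GradOp\presstest|^2$ follows by $\visc$-weighted Cauchy--Schwarz,
\[
  |\langle \GradOp\presstest, \boldsymbol{\psi}\rangle|
  \le
  \Bigl(\!\int_\Omega \visc^{-1} |\GradOp\presstest|^2\Bigr)^{\!1/2}
  \Bigl(\!\int_\Omega \visc|\boldsymbol{\psi}|^2\Bigr)^{\!1/2},
\]
combined with the weighted Poincar\'e--Friedrichs and Korn inequalities on $V$ (the no-normal-flow condition in \eqref{eq:fnc-sp-vel-bc} secures both). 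For the matching lower bound on $\langle \SchurApproxOpWBFBT\,\presstest, \presstest\rangle$ the natural test direction in the $V_w$-supremum is $r = \presstest$: the numerator becomes $\int_\Omega \visc^{-1/2}|\GradOp\presstest|^2$, and, after a further Cauchy--Schwarz, the residual task is to control the denominator $a_\visc(w\,\GradOpExt\presstest,\,w\,\GradOpExt\presstest)$. Expanding the symmetric gradient by the product rule,
\[
  \symgradient(w\,\GradOpExt\presstest)
  = w\,\symgradient\GradOpExt\presstest
    + (\GradOpExt w)\otimes_s \GradOpExt\presstest,
\]
and using $|\GradOpExt \visc^{-1/2}|^2 = \tfrac14\,\visc^{-3}|\GradOpExt\visc|^2$, precisely extracts the factor $1 + \tfrac14\|\GradOpExt\visc\|_{\Linf(\Omega)^d}^2$ appearing in $C_{\wBFBT}$. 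Multiplying the upper and lower bounds delivers the claimed constant. The constant-viscosity case is a direct specialization: with $\visc \equiv 1$ and $\bfbtweightleft \equiv \bfbtweightright \equiv 1$ the product-rule term vanishes identically and the weighted inequalities reduce to their classical forms, leaving $(C_P^2+1)\,C_K^2$.

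The main obstacle will be the final step of the lower bound: the product-rule expansion features $\symgradient\GradOpExt\presstest$, i.e., the Hessian of $\presstest$, which is not controlled by the $\hat Q$-norm a priori. A cleaner route that I would ultimately take is to replace the na\"ive choice $r = \presstest$ with a Poisson potential $r = (\PressPoissonOp_w\adjoint)^{-1}\DivOp\vel^\star$, so that $w\,\GradOpExt r$ agrees with $\vel^\star$ up to a divergence-free correction (which vanishes when paired with $\GradOp\presstest$ under the boundary conditions of \eqref{eq:abstr-stokes}). The denominator bound then reduces to an elliptic regularity estimate for $\PressPoissonOp_w\adjoint$ together with the weighted Korn--Poincar\'e inequality applied to $\vel^\star$ itself, which also contributes the $(C_{P,\visc}^2+1)\,C_{K,\visc}^2$ factor in a clean way.
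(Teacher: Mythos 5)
Your lower bound is fine and is essentially the paper's argument in different clothing: the Galerkin-projection formulation and the paper's ``sup over the subspace of weighted gradients $\le$ sup over all of $V$'' (Lemmata~\ref{lem:inv-op-sup-form}--\ref{lem:schur-sup-form} and Corollary~\ref{cor:wbfbt-lower-bound}) are the same statement, though you skip the boundary-term bookkeeping that Lemma~\ref{lem:schur-sup-form} needs to justify the sup-form of $\SchurApproxOpWBFBT$ in the first place. The upper bound, however, has a genuine gap, and it is not only the Hessian obstacle you flag. Your sandwiching quantity $\int_\Omega \visc^{-1}\abs{\gradient\presstest}^2$ is the wrong intermediate norm: since $\innerprod{\SchurApproxOpWBFBT\presstest}{\presstest}\le\innerprod{\SchurOp\presstest}{\presstest}\lesssim \norm{\visc^{-\frac12}\gradient\presstest}_{\FncSpVelDual}^2$, and the $\Sobolev{-1}$-norm of $\gradient\presstest$ can be arbitrarily small relative to its $\LL$-norm (take $\presstest$ highly oscillatory), no bound of the form $\innerprod{\SchurApproxOpWBFBT\presstest}{\presstest}\ge c\int_\Omega\visc^{-1}\abs{\gradient\presstest}^2$ can hold with $c$ independent of $\presstest$. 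So the sandwich cannot close, regardless of which test direction you pick.

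Your proposed repair --- testing with the Poisson potential $r=(\PressPoissonOp_{w}\adjoint)^{-1}\DivOp\vel^\star$ --- does make the numerator equal $\innerprod{\SchurOp\presstest}{\presstest}$, but then you must show $\bilinearform{a_\visc}{w\GradOpExt r}{w\GradOpExt r}\le C_{\wBFBT}\,\bilinearform{a_\visc}{\vel^\star}{\vel^\star}$, where $w\GradOpExt r$ differs from $\vel^\star$ by a divergence-free field. Controlling the symmetric gradient of that divergence-free remainder requires an $H^2$ elliptic-regularity estimate for the variable-coefficient Neumann problem (unavailable for $w\in\Sobolev{1}\cap\Linf$ on a Lipschitz domain) and would in any case produce domain- and $\visc$-dependent constants incompatible with the stated $C_{\wBFBT}$. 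The paper's route avoids all of this: it sandwiches \emph{both} quadratic forms by the dual norm $\norm{\visc^{-\frac12}\GradOpExt\presstest}_{\FncSpVelDual}^2$, bounds the Rayleigh-quotient denominator $\innerprod{\bfbtweight\ViscStressOp_{\visc}\bfbtweight\GradOpExt\press}{\GradOpExt\press}$ above by $2\bigl(1+\tfrac14\norm{\gradient\visc}_{\Linf(\Omega)^d}^2\bigr)\norm{\GradOpExt\press}_{\FncSpVel}^2$ \emph{uniformly over the test functions} $\press\in C^\infty(\Omega)$ (so the second derivatives appear on both sides of that inequality and never have to be controlled by $\presstest$), and then uses a density argument to identify $\sup_{\press}\innerprod{\GradOpExt\press}{\bfbtweight\GradOpExt\presstest}/\norm{\GradOpExt\press}_{\FncSpVel}$ with the full dual norm. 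Replacing your $\LL_{1/\visc}$-based sandwich by this dual-norm sandwich is the missing idea.
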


\subsection{Proofs}
\label{sec:theory-proof}

The proof of \cref{thm:main-result} is established in the remainder of this
section.
In what follows, suprema are understood over spaces excluding operator kernels
that would cause a supremum to blow up.
The following basic, but hereafter frequently used, result is shown for
completeness of the discussion.

\begin{lemma}[sup-form of inverse operator]
  \label{lem:inv-op-sup-form}
  Let
  $V$ be a complete Hilbert space and
    $W \subseteq V$
  be a dense subspace.  Assume the linear operator
    $T : V \rightarrow V\dual$
  to be bounded, invertible, symmetric, and positive definite.
  Then for any
    $f \in V\dual$
  follows
  \begin{equation*}
    \innerprod{T^{-1} f}{f} =
      \sup_{w \in W} \frac{\innerprod{w}{f}^2}{\innerprod{Tw}{w}}.
  \end{equation*}
\end{lemma}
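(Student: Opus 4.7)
The plan is to exploit the fact that $T$ being bounded, symmetric, and positive definite lets us define the \emph{energy inner product} $\innerprod{\cdot}{\cdot}_T \coloneqq \innerprod{T\cdot}{\cdot}$ on $V$, which is equivalent to the original inner product, and then to recognize the quotient on the right-hand side as a Cauchy--Schwarz ratio for this inner product. Set $u \coloneqq T^{-1} f \in V$, so that $\innerprod{T^{-1}f}{f} = \innerprod{u}{Tu} = \innerprod{u}{u}_T$, and also $\innerprod{w}{f} = \innerprod{w}{Tu} = \innerprod{w}{u}_T$ for every $w \in V$.

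For the upper bound ($\le$), I would simply apply Cauchy--Schwarz in the energy inner product to obtain
\begin{equation*}
  \innerprod{w}{f}^2 = \innerprod{w}{u}_T^2 \le \innerprod{w}{w}_T \, \innerprod{u}{u}_T
  = \innerprod{Tw}{w} \, \innerprod{T^{-1}f}{f}
\end{equation*}
for every $w \in W$ with $\innerprod{Tw}{w} > 0$; dividing and taking the supremum over $w \in W$ yields the $\le$ inequality.

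For the lower bound ($\ge$), the natural move is to plug in $w = u$, which gives equality
$\innerprod{u}{f}^2/\innerprod{Tu}{u} = \innerprod{Tu}{u} = \innerprod{T^{-1}f}{f}$.
The subtlety, and the only real obstacle, is that $u$ might not lie in $W$, so I would use the density of $W$ in $V$: pick a sequence $w_n \in W$ with $w_n \to u$ in $V$. Boundedness of $T$ from $V$ to $V'$ makes $w \mapsto \innerprod{Tw}{w}$ continuous on $V$, and $w \mapsto \innerprod{w}{f}$ is continuous as $f \in V'$, so both the numerator and denominator of the ratio converge to $\innerprod{u}{f}^2$ and $\innerprod{Tu}{u}$, respectively. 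Since $T$ is positive definite, the denominator stays bounded away from zero along the sequence (as $u \ne 0$; the trivial case $f = 0$ makes both sides zero, so it can be excluded). Hence
\begin{equation*}
  \sup_{w \in W} \frac{\innerprod{w}{f}^2}{\innerprod{Tw}{w}}
  \ge \lim_{n\to\infty} \frac{\innerprod{w_n}{f}^2}{\innerprod{Tw_n}{w_n}}
  = \frac{\innerprod{u}{f}^2}{\innerprod{Tu}{u}}
  = \innerprod{T^{-1}f}{f},
\end{equation*}
which, combined with the reverse inequality, closes the proof.
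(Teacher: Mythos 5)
Your proposal is correct and follows essentially the same route as the paper: the upper bound is Cauchy--Schwarz (the paper phrases it via $T^{1/2}$ and $T^{-1/2}$, you via the energy inner product $\innerprod{T\cdot}{\cdot}$, which is the same estimate), and the lower bound is obtained by approximating $u = T^{-1}f$ with a sequence from the dense subspace $W$ and passing to the limit in the quotient. Your added remarks on the denominator staying bounded away from zero and on the trivial case $f=0$ are minor refinements the paper leaves implicit.
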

\begin{proof}
  Let $w \in W$, then with H\"older's inequality follows
  \begin{equation*}
      \innerprod{w}{f}^2
    =
      \innerprod{T^{1/2} w}{T^{-1/2} f}^2
    \le
      \norm{T^{1/2} w}^2 \norm{T^{-1/2} f}^2
    =
      \innerprod{T w}{w} \innerprod{T^{-1} f}{f}.
  \end{equation*}
  Additionally, let
    $v = T^{-1} f$
  and since $W \subseteq V$ dense, there exists a sequence
    $\{w_k\}_k \subset W$
  such that
    $w_k \rightarrow v = T^{-1} f \in V$, hence
  \begin{equation*}
      \frac{\innerprod{w_k}{f}^2}{\innerprod{Tw_k}{w_k}}
    \rightarrow
      \frac{\innerprod{v}{f}^2}{\innerprod{Tv}{v}}
    =
      \frac{\innerprod{T^{-1} f}{f}^2}{\innerprod{f}{T^{-1} f}}
    =
      \innerprod{T^{-1} f}{f},
  \end{equation*}
  which shows that equality is achieved in the limit.
\end{proof}

The next lemma establishes Schur complement properties that are
essential for deriving lower and upper
bounds in the spectral equivalence estimates.

\begin{lemma}[sup-form of Schur complement]
  \label{lem:schur-sup-form}
  With the definitions from \cref{sec:theory-intro},
  the following two equalities hold:
  \begin{equation}
    \label{eq:wbfbt-sup-form}
      \innerprod{\SchurApproxOpWBFBT\,\presstest}{\presstest}
    =
      \sup_{\press \in \hat{P}}
        \frac{ \innerprod{ \GradOpExt\press}
                         { \bfbtweightright\GradOpExt\presstest }^2 }
             { \innerprod{ \bfbtweightleft\ViscStressOp_{\visc}
                           \bfbtweightright\GradOpExt\press }
                         { \GradOpExt\press } }
    \quad\text{for all } \presstest \in \hat{Q},
  \end{equation}
  where
    $\hat{P} \coloneqq C^{\infty}(\Omega)$
  and
    $\hat{Q} \coloneqq \FncSpPressMean\cap\Sobolev{1}(\Omega)$,
  and
  \begin{equation}
    \label{eq:schur-sup-form}
      \innerprod{\SchurOp\presstest}{\presstest}
    =
      \sup_{\veltest \in V}
        \frac{ \innerprod{ \veltest }
                         { \bfbtweightright\GradOp\presstest }^2 }
             { \innerprod{ \bfbtweightleft\ViscStressOp_{\visc}
                           \bfbtweightright\veltest }
                         { \veltest } }
    \quad\text{for all } \presstest \in Q.
  \end{equation}
\end{lemma}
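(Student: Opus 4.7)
The proof of both identities will rest on \cref{lem:inv-op-sup-form}, applied to an appropriate bounded, symmetric, positive-definite operator built from $\ViscStressOp_\visc$, followed by integration by parts in which the boundary contributions vanish by virtue of the no-normal-flow condition baked into $V$ in~\eqref{eq:fnc-sp-vel-bc} and the homogeneous Neumann condition $\bn\cdot\GradOpExt\press = 0$ built into $\PressPoissonOp_{w}\adjoint$ in~\eqref{eq:press-poisson-op}. A final change of variables, using that multiplication by a strictly positive bounded weight is a bijection on $V$ (resp.\ on $\hat{Q}$), will place the weights $\bfbtweightleft,\bfbtweightright$ in the slots required by the claimed expressions.

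For \eqref{eq:schur-sup-form}, I would first transfer a derivative to obtain
\[
  \innerprod{\SchurOp\presstest}{\presstest}
  = \innerprod{\DivOp\ViscStressOp_\visc^{-1}\GradOp\presstest}{\presstest}
  = \innerprod{\ViscStressOp_\visc^{-1}\GradOp\presstest}{\GradOp\presstest},
\]
using that $\DivOp = -\divergence$ is the $\LL$-adjoint of $\GradOp = \gradient$ on $V$. Since $\ViscStressOp_\visc\colon V\to V\dual$ is bounded, symmetric, and coercive (coercivity via a weighted Korn inequality), \cref{lem:inv-op-sup-form} applies with $T = \ViscStressOp_\visc$, $W = V$, and $f = \GradOp\presstest$, yielding a supremum of $\innerprod{\tilde\veltest}{\GradOp\presstest}^2 / \innerprod{\ViscStressOp_\visc\tilde\veltest}{\tilde\veltest}$ over $\tilde\veltest\in V$. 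Finally, since $\bfbtweightright\in\Linf(\Omega)$ is bounded away from zero, multiplication by $\bfbtweightright$ is a bijection of $V$, so substituting $\tilde\veltest = \bfbtweightright\veltest$ and shifting one factor of $\bfbtweightright$ into the $\LL$-inner product in the numerator produces exactly the right-hand side of~\eqref{eq:schur-sup-form} (using $\bfbtweightleft = \bfbtweightright$ in the denominator, which is the setting relevant to \cref{thm:main-result}).

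For \eqref{eq:wbfbt-sup-form} the strategy mirrors the previous one but applied to the inner operator $T \coloneqq \DivOpExt\bfbtweightleft\ViscStressOp_\visc\bfbtweightright\GradOpExt$. Using the $\LL$-adjoint relation $\innerprod{\PressPoissonOp_{\bfbtweightright}\adjoint u}{v} = \innerprod{u}{\PressPoissonOp_{\bfbtweightright} v}$ together with the definition~\eqref{eq:wbfbt-op}, I rewrite
\[
  \innerprod{\SchurApproxOpWBFBT\presstest}{\presstest}
  = \innerprod{T^{-1}\PressPoissonOp_{\bfbtweightleft}\presstest}{\PressPoissonOp_{\bfbtweightright}\presstest}.
\]
Under $\bfbtweightleft = \bfbtweightright$, $T$ is symmetric and coercive, so \cref{lem:inv-op-sup-form} applies with $f = \PressPoissonOp_{\bfbtweightleft}\presstest$ and with $W = \hat P = C^\infty(\Omega)$ dense in the domain of $T$. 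A final integration by parts, exploiting the Neumann condition built into $\PressPoissonOp_{w}\adjoint$, converts $\innerprod{\press}{\PressPoissonOp_{\bfbtweightleft}\presstest}$ into $\innerprod{\GradOpExt\press}{\bfbtweightleft\GradOpExt\presstest}$ and $\innerprod{T\press}{\press}$ into $\innerprod{\bfbtweightleft\ViscStressOp_\visc\bfbtweightright\GradOpExt\press}{\GradOpExt\press}$, which matches the right-hand side of~\eqref{eq:wbfbt-sup-form}.

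The main obstacle, beyond routine bookkeeping, is that \cref{lem:inv-op-sup-form} requires symmetry of the operator to be inverted, whereas $\DivOpExt\bfbtweightleft\ViscStressOp_\visc\bfbtweightright\GradOpExt$ is symmetric only when $\bfbtweightleft = \bfbtweightright$. Since \cref{thm:main-result} uses $\bfbtweightleft = \bfbtweightright = \visc^{-1/2}$ (or both equal to $1$), this does not obstruct the present proof, but one still has to verify that the coercivity needed to invoke the sup-form lemma follows from a weighted Korn-type inequality on $V$, that $C^\infty(\Omega)$ is indeed dense in the domain of $T$ viewed as an operator on $\hat{Q}$, and that each integration by parts is legitimate under the boundary conditions encoded in~\eqref{eq:fnc-sp-vel-bc} and~\eqref{eq:press-poisson-op}.
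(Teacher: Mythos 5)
Your proposal is correct and follows essentially the same route as the paper: both identities are reduced to \cref{lem:inv-op-sup-form} applied to the weighted operators $\bfbtweightleft\ViscStressOp_{\visc}\bfbtweightright$ and $\DivOpExt\bfbtweightleft\ViscStressOp_{\visc}\bfbtweightright\GradOpExt$, with integration by parts whose boundary terms vanish by the no-normal-flow and homogeneous Neumann conditions. The only cosmetic difference is that for~\eqref{eq:schur-sup-form} you apply the lemma to the unweighted $\ViscStressOp_{\visc}$ and then substitute $\tilde{\veltest}=\bfbtweightright\veltest$, whereas the paper folds the weights into $T$ before invoking the lemma; your explicit remark that symmetry requires $\bfbtweightleft=\bfbtweightright$ is a point the paper leaves implicit.
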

\begin{proof}
  For
    $\presstest \in \hat{Q}$,
  we use integration by parts on the left hand side
  of~\eqref{eq:wbfbt-sup-form} to obtain
  \begin{align*}
      \innerprod{\SchurApproxOpWBFBT\,\presstest}{\presstest}
    &=
      \int_{\Omega}
        \left( \bfbtweightright\GradOpExt
               (\DivOpExt\bfbtweightleft\ViscStressOp_{\visc}
                \bfbtweightright\GradOpExt)^{-1}
               \PressPoissonOp_{\bfbtweightleft}\presstest \right)
        \left(\GradOpExt\presstest\right) \intd\bx
      + b_1(\presstest)
    \\
    &=
      \int_{\Omega}
        \left( (\DivOpExt\bfbtweightleft\ViscStressOp_{\visc}
                \bfbtweightright\GradOpExt)^{-1}
               \PressPoissonOp_{\bfbtweightleft}\presstest \right)
        \left(\PressPoissonOp_{\bfbtweightright}\presstest\right) \intd\bx
      + b_1(\presstest) + b_2(\presstest)
  \end{align*}
  with boundary terms
  \begin{align*}
    b_1(\presstest) &\coloneqq
      -\int_{\partial\Omega}
        \left( \bn\cdot \bfbtweightright\GradOpExt
               (\DivOpExt\bfbtweightleft\ViscStressOp_{\visc}
                \bfbtweightright\GradOpExt)^{-1}
               \PressPoissonOp_{\bfbtweightleft}\presstest \right)
        \presstest \intd\bx,
    \\
    b_2(\presstest) &\coloneqq
      \int_{\partial\Omega}
        \left( (\DivOpExt\bfbtweightleft\ViscStressOp_{\visc}
                \bfbtweightright\GradOpExt)^{-1}
               \PressPoissonOp_{\bfbtweightleft}\presstest \right)
        \left(\bn\cdot \bfbtweightright\GradOpExt\presstest\right) \intd\bx.
  \end{align*}
  Using that $\hat{P} \subset \FncSpPressHiReg$ is
  dense, application of Lemma~\ref{lem:inv-op-sup-form} and further
  integration by parts yields
  \begin{align*}
      \innerprod{\SchurApproxOpWBFBT\,\presstest}{\presstest}
    &=
      \sup_{\press \in \hat{P}}
        \frac{ \innerprod{ \press}
                         { \PressPoissonOp_{\bfbtweightright}\presstest }^2 }
             { \innerprod{ \DivOpExt\bfbtweightleft\ViscStressOp_{\visc}
                           \bfbtweightright\GradOpExt\press }
                         { \press } }
      + b_1(\presstest) + b_2(\presstest)
    \\
    &=
      \sup_{\press \in \hat{P}}
        \frac{ \left( \innerprod{ \GradOpExt\press}
                                { \bfbtweightright\GradOpExt\presstest }
                      + b_3(\press,\presstest) \right)^2 }
             { \innerprod{ \bfbtweightleft\ViscStressOp_{\visc}\bfbtweightright
                           \GradOpExt\press }
                         { \GradOpExt\press } + b_4(\press) }
      + b_1(\presstest) + b_2(\presstest)
  \end{align*}
  with boundary terms
  \begin{align*}
    b_3(\press,\presstest) &\coloneqq
      -\int_{\partial\Omega}
        \press \left(\bn\cdot\bfbtweightright\GradOpExt\presstest\right)
        \intd\bx,
    \\
    b_4(\press) &\coloneqq
      -\int_{\partial\Omega}
        \left( \bn\cdot \bfbtweightleft\ViscStressOp_{\visc}\bfbtweightright
               \GradOpExt\press \right)
        \press \intd\bx.
  \end{align*}
  Because the operator
    $\PressPoissonOp_{\bfbtweightright}\adjoint$
  from~\eqref{eq:press-poisson-op} is augmented with homogeneous Neumann
  boundary conditions,
    $\bn\cdot\GradOpExt\press = 0$,
  the boundary terms
    $b_1(\presstest)$,
    $b_2(\presstest)$, and
    $b_3(\press,\presstest)$
  vanish.
  In addition, $\press \in \hat{P}$ is sufficiently regular for the term
    $b_4(\press)$
  to be well-defined and it equals to zero because the velocity $\vel$
  satisfies
    $\bn\cdot\vel=0$ on $\partial\Omega$.
  Hence,~\eqref{eq:wbfbt-sup-form} follows.

  To show~\eqref{eq:schur-sup-form}, let
    $\presstest \in Q$,
  then for the exact Schur complement we apply integration by parts with a
  vanishing boundary term
  \begin{equation*}
      \innerprod{\SchurOp\presstest}{\presstest}
    =
      \innerprod{\DivOp\ViscStressOp_{\visc}^{-1}\GradOp\presstest}{\presstest}
    =
      \innerprod{\ViscStressOp_{\visc}^{-1}\GradOp\presstest}{\GradOp\presstest}
    =
      \innerprod{ \bfbtweightright^{-1} \ViscStressOp_{\visc}^{-1}
                  \bfbtweightleft^{-1} \bfbtweightleft\GradOp\presstest }
                { \bfbtweightright\GradOp\presstest }
  \end{equation*}
  and~\eqref{eq:schur-sup-form} follows from Lemma~\ref{lem:inv-op-sup-form}.
\end{proof}

A direct consequence of Lemma~\ref{lem:schur-sup-form} is the following lower
bound.

\begin{corollary}[Lower bound, $\SchurApproxOpWBFBT\lesssim\SchurOp$]
  \label{cor:wbfbt-lower-bound}
  The exact Schur complement is bounded by the \wBFBT\ approximation
  from below, i.e.,
  \begin{equation}
    \label{eq:wbfbt-lower-bound}
      \innerprod{\SchurApproxOpWBFBT\,\presstest}{\presstest}
    \le
      \innerprod{\SchurOp\presstest}{\presstest}
    \quad\text{for all } \presstest\in\hat{Q},
  \end{equation}
  where
    $\hat{Q} \coloneqq \FncSpPressMean\cap\Sobolev{1}(\Omega)$.
\end{corollary}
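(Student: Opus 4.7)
The proof is essentially a one-line application of \cref{lem:schur-sup-form}, so the plan is to unpack the two sup-forms and compare their feasible sets. First, I would apply~\eqref{eq:wbfbt-sup-form} and~\eqref{eq:schur-sup-form} to rewrite
\begin{equation*}
  \innerprod{\SchurApproxOpWBFBT\,\presstest}{\presstest}
  = \sup_{\press \in \hat{P}}
      \frac{\innerprod{\GradOpExt\press}{\bfbtweightright\GradOpExt\presstest}^2}
           {\innerprod{\bfbtweightleft\ViscStressOp_{\visc}\bfbtweightright\GradOpExt\press}{\GradOpExt\press}},
  \quad
  \innerprod{\SchurOp\presstest}{\presstest}
  = \sup_{\veltest \in V}
      \frac{\innerprod{\veltest}{\bfbtweightright\GradOp\presstest}^2}
           {\innerprod{\bfbtweightleft\ViscStressOp_{\visc}\bfbtweightright\veltest}{\veltest}}.
\end{equation*}
for $\presstest \in \hat{Q}$. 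The inequality then follows if every candidate $\press \in \hat{P}$ for the first supremum supplies a feasible candidate $\veltest := \GradOpExt\press \in V$ for the second, with matching numerator and denominator.

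Next, I would verify that the substitution $\veltest = \GradOpExt\press$ indeed gives an element of $V$. For $\presstest \in \hat{Q} \subset \Sobolev{1}(\Omega)$, $\GradOp\presstest$ and $\GradOpExt\presstest$ coincide as elements of $\LL(\Omega)^d$, so the numerators agree identically. The denominators are also identical once we write them both as $\innerprod{\bfbtweightleft\ViscStressOp_{\visc}\bfbtweightright\veltest}{\veltest}$ with $\veltest = \GradOpExt\press$. The only nontrivial point is membership in $V$: since $\PressPoissonOp_{\bfbtweightright}\adjoint$ in~\eqref{eq:press-poisson-op} is augmented with the homogeneous Neumann condition $\bn\cdot\GradOpExt\press = 0$, we indeed get $\GradOpExt\press \in V$ as defined in~\eqref{eq:fnc-sp-vel-bc}.

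Finally, because $\{\GradOpExt\press : \press \in \hat{P}\} \subseteq V$, the supremum in~\eqref{eq:schur-sup-form} is taken over a larger set than the supremum in~\eqref{eq:wbfbt-sup-form}, and the lower bound~\eqref{eq:wbfbt-lower-bound} follows immediately. The main (minor) obstacle is the boundary-condition bookkeeping I just described: one must confirm that $\hat{P}$, interpreted as the dense subspace of the domain of $\PressPoissonOp_{\bfbtweightright}\adjoint$ used in the proof of \cref{lem:schur-sup-form}, inherits $\bn\cdot\GradOpExt\press = 0$ so that the range of $\GradOpExt$ on $\hat{P}$ sits inside $V$; no new analytical estimates (no Poincar\'e, Korn, or regularity arguments) are needed for this direction.
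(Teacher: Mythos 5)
Your proposal is correct and follows essentially the same route as the paper: both invoke the two sup-forms of Lemma~\ref{lem:schur-sup-form} and observe that $\GradOpExt$ maps $\hat{P}$ into $V$, so the supremum defining $\innerprod{\SchurApproxOpWBFBT\,\presstest}{\presstest}$ is taken over a subset of the feasible set for $\innerprod{\SchurOp\presstest}{\presstest}$. Your extra bookkeeping on why $\GradOpExt\press\in V$ (via the homogeneous Neumann condition $\bn\cdot\GradOpExt\press=0$) only makes explicit what the paper states in one clause.
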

\begin{proof}
  Let
    $\hat{P} = C^{\infty}(\Omega)$
  and
    $\presstest \in \hat{Q}$.
  Since $\GradOpExt$ maps $\hat{P}$ into $V$, we
  combine~\eqref{eq:wbfbt-sup-form} and~\eqref{eq:schur-sup-form} to get
  \begin{equation*}
      \innerprod{\SchurApproxOpWBFBT\,\presstest}{\presstest}
    =
      \sup_{\press \in \hat{P}}
        \frac{ \innerprod{ \GradOpExt\press}
                         { \bfbtweightright\GradOp\presstest }^2 }
             { \innerprod{ \bfbtweightleft\ViscStressOp_{\visc}\bfbtweightright
                           \GradOpExt\press }
                         { \GradOpExt\press } }
    \le
      \sup_{\veltest \in V}
        \frac{ \innerprod{ \veltest }{ \bfbtweightright\GradOp\presstest }^2 }
             { \innerprod{ \bfbtweightleft\ViscStressOp_{\visc}\bfbtweightright
                           \veltest }{ \veltest } }
    =
      \innerprod{\SchurOp\presstest}{\presstest},
  \end{equation*}
  and obtain the result~\eqref{eq:wbfbt-lower-bound}.
\end{proof}

We begin the derivation of an upper bound for the case of constant
viscosity $\visc\equiv 1$.
Note that $\SchurApproxOpWBFBT$ is scaling invariant with respect to constants
multiplied to the \wBFBT\ weight functions $\bfbtweightleft$,
$\bfbtweightright$.  Hence, it always assumes the correct scaling of $\SchurOp$
independent of the viscosity constant.
The
result for constant viscosity presented below in
Lemma~\ref{lem:bfbt-upper-bound} is generalized to variable viscosity in
Lemma~\ref{lem:wbfbt-upper-bound}.
While Lemma~\ref{lem:bfbt-upper-bound} is a special case of
Lemma~\ref{lem:wbfbt-upper-bound}, we first prove the result for constant
viscosity as the arguments are less technical and easier to follow.  In the
proof of the result for variable viscosity, we build on some of the arguments
from the constant viscosity case and thus avoid unnecessary duplication.

\begin{lemma}[Upper bound, $\SchurOp\lesssim\SchurApproxOpWBFBT$, for
              constant $\visc$]
  \label{lem:bfbt-upper-bound}
  Assume a constant viscosity
    $\visc \equiv 1$
  and constant \wBFBT\ weight functions
    $\bfbtweightleft \equiv 1 \equiv \bfbtweightright$,
  and, as before in Lemma~\ref{lem:schur-sup-form}, let
    $\hat{Q} = \FncSpPressMean\cap\Sobolev{1}(\Omega)$.
  Then the exact Schur complement is bounded by the \wBFBT\ approximation from
  above by
  \begin{equation}
    \label{eq:bfbt-upper-bound}
      \innerprod{\SchurOp\presstest}{\presstest}
    \le
      (C_P^2 + 1) \, C_K^2
      \innerprod{\SchurApproxOpWBFBT\,\presstest}{\presstest}
    \quad\text{for all } \presstest\in\hat{Q}
  \end{equation}
  with constants $C_P,C_K > 0$ stemming from Poincar\'e--Friedrichs' and Korn's
  inequalities, respectively.
\end{lemma}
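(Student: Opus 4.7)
With $\visc \equiv 1$ and $\bfbtweightleft \equiv 1 \equiv \bfbtweightright$, Lemma~\ref{lem:schur-sup-form} reduces the two quadratic forms to
\begin{equation*}
  \innerprod{\SchurOp\presstest}{\presstest} =
    \sup_{\veltest \in V}
    \frac{\innerprod{\veltest}{\GradOp\presstest}^2}{\innerprod{\ViscStressOp_{1}\veltest}{\veltest}},
  \quad
  \innerprod{\SchurApproxOpWBFBT\,\presstest}{\presstest} =
    \sup_{\press \in \hat{P}}
    \frac{\innerprod{\GradOpExt\press}{\GradOp\presstest}^2}{\innerprod{\ViscStressOp_{1}\GradOpExt\press}{\GradOpExt\press}},
\end{equation*}
whose denominators equal $2\norm{\symgradient\veltest}_{\LL(\Omega)^{d\times d}}^2$ and $2\norm{\symgradient\GradOpExt\press}_{\LL(\Omega)^{d\times d}}^2$, respectively. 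The $\SchurApproxOpWBFBT$-supremum is thus effectively taken over the strictly smaller test set $\{\GradOpExt\press : \press \in \hat{P}\} \subset V$ of gradient fields. The plan is to quantify this restriction: for every $\veltest \in V$, exhibit a surrogate $\press$ whose numerator coincides with that of $\veltest$ and whose denominator is at most a factor $(C_P^2+1)\,C_K^2$ larger.

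The surrogate is supplied by the Helmholtz decomposition matched to the boundary data of $V$. Given $\veltest \in V$, let $\press \in \FncSpPressMean \cap \Sobolev{1}(\Omega)$ be the mean-zero weak solution of the Neumann Poisson problem $\DivOp\GradOpExt\press = \DivOp\veltest$ in $\Omega$ with $\bn \cdot \GradOpExt\press = 0$ on $\partial\Omega$, and define the divergence-free remainder $\boldvar{w} \coloneqq \veltest - \GradOpExt\press$, which then satisfies $\bn \cdot \boldvar{w} = 0$. Integration by parts gives $\innerprod{\boldvar{w}}{\GradOp\presstest} = -\innerprod{\DivOp\boldvar{w}}{\presstest} = 0$, so the numerators agree exactly: $\innerprod{\veltest}{\GradOp\presstest} = \innerprod{\GradOpExt\press}{\GradOp\presstest}$.

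Everything therefore reduces to the denominator estimate $\norm{\symgradient\GradOpExt\press}^2 \le (C_P^2 + 1)\,C_K^2\,\norm{\symgradient\veltest}^2$, exploiting that $\symgradient\GradOpExt\press = \gradient\GradOpExt\press$ is the already-symmetric Hessian of $\press$. The route is to expand
\begin{equation*}
  \norm{\symgradient\veltest}^2 =
    \norm{\gradient\GradOpExt\press}^2
    + 2\innerprod{\gradient\GradOpExt\press}{\symgradient\boldvar{w}}
    + \norm{\symgradient\boldvar{w}}^2,
\end{equation*}
and to process the cross term by integrating one derivative off the Hessian. Using symmetry of $\gradient\GradOpExt\press$ and $\DivOp\boldvar{w}=0$ annihilates the volume contribution, leaving only a boundary integral in which $\bn\cdot\GradOpExt\press=0$ and $\bn\cdot\boldvar{w}=0$ collapse it to a lower-order, curvature-controlled quantity bounded by $\norm{\GradOpExt\press}_{\LL}^2 \le \norm{\veltest}_{\LL}^2$, the latter inequality being the $\LL$-orthogonality of the Helmholtz decomposition. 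This residual is then dominated by $\norm{\symgradient\veltest}^2$ through Korn's inequality (converting $\norm{\gradient\veltest}^2$ to $C_K^2\,\norm{\symgradient\veltest}^2$) together with Poincar\'e--Friedrichs' (supplying the extra $C_P^2$), while the leading $\norm{\gradient\GradOpExt\press}^2$ term on the right accounts for the ``$+1$'' in $(C_P^2+1)$.

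The main obstacle I anticipate is precisely the cross-term analysis: after the integration by parts, the residual boundary integral must be estimated in a way that does not reintroduce $\norm{\gradient\GradOpExt\press}$, since such a term could not be absorbed back into $\norm{\symgradient\veltest}^2$ and would break the chain. Once that bound is secured, restricting the right-hand supremum to the chosen $\press$ and taking the supremum over $\veltest \in V$ on the left immediately gives the claimed inequality~\eqref{eq:bfbt-upper-bound}.
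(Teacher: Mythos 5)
Your opening move---replacing an arbitrary $\veltest\in V$ by the gradient part $\GradOpExt\press$ of its Helmholtz decomposition and observing that the divergence-free remainder $\boldvar{w}$ with $\bn\cdot\boldvar{w}=0$ leaves the numerator $\innerprod{\veltest}{\GradOp\presstest}$ unchanged---is precisely the idea behind the paper's identity~\eqref{eq:bfbt-upper-bound:1st-factor}. But from there the arguments diverge, and your version has a genuine gap in exactly the step you flag as the obstacle. Your plan needs the pointwise denominator comparison $\norm{\symgradient\GradOpExt\press}_{\FncSpVelGrad}^2\le (C_P^2+1)\,C_K^2\,\norm{\symgradient\veltest}_{\FncSpVelGrad}^2$, i.e., an $\LL$ bound on the Hessian of the Neumann--Poisson solution $\press$ in terms of $\symgradient\veltest$. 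That is an $H^2$ regularity estimate for the Neumann Laplacian: it fails on general Lipschitz domains, and where it does hold (convex or $C^{1,1}$ domains) its constant is a domain-dependent regularity constant, not $(C_P^2+1)\,C_K^2$. Your proposed route to it does not close: after integrating one derivative off the Hessian in the cross term $\innerprod{\gradient\GradOpExt\press}{\symgradient\boldvar{w}}$, the surviving boundary integral contains traces of $\gradient\press$ and $\gradient\boldvar{w}$ (only the tangential derivatives of $\bn\cdot\boldvar{w}$ vanish on $\partial\Omega$, not the normal ones), and such gradient traces are not bounded by $\norm{\GradOpExt\press}_{\LL(\Omega)^d}^2$ or $\norm{\veltest}_{\LL(\Omega)^d}^2$. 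The ``curvature-controlled lower-order quantity'' is therefore not actually lower order, and the estimate as described would reintroduce precisely the terms you say must be avoided.

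The paper sidesteps this entirely by never bounding $\norm{\symgradient\GradOpExt\press}$ by $\norm{\symgradient\veltest}$. Instead both quadratic forms are compared to the intermediate quantity $\norm{\GradOpExt\presstest}_{\FncSpVelDual}^2$: the \wBFBT\ quotient is split into two factors, the second of which is bounded below by $1/2$ using only the trivial inequality $\norm{\gradient\GradOpExt\press}_{\FncSpVelGrad}^2\le\norm{\GradOpExt\press}_{\FncSpVel}^2$ (no elliptic regularity), while the first factor reduces to $\norm{\GradOpExt\presstest}_{\FncSpVelDual}$ by your same Helmholtz/density observation. The constants $C_P$ and $C_K$ then enter only on the Schur-complement side, via Cauchy--Schwarz in the duality pairing followed by $\norm{\veltest}_{\FncSpVel}^2\le(C_P^2+1)\norm{\gradient\veltest}_{\FncSpVelGrad}^2\le(C_P^2+1)\,C_K^2\,\norm{\symgradient\veltest}_{\FncSpVelGrad}^2$ applied to the test function $\veltest$---not to $\press$. (This is also where the ``$+1$'' comes from, rather than from the leading term of your expansion.) To repair your argument, restructure it around this intermediate dual norm; as written, the denominator estimate carries the entire difficulty and is not established.
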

\begin{proof}
  Let
    $\hat{P} = C^{\infty}(\Omega)$
  and
    $\presstest \in \hat{Q}$,
  then due to~\eqref{eq:wbfbt-sup-form} we can write
  \begin{equation}
    \label{eq:bfbt-upper-bound:ansatz}
      \innerprod{\SchurApproxOpWBFBT\,\presstest}{\presstest}
    =
      \sup_{\press \in \hat{P}}
        \frac{ \innerprod{\GradOpExt\press}{\GradOpExt\presstest}^2 }
             { \norm{\GradOpExt\press}_{\FncSpVel}^2 } \,
        \frac{ \norm{\GradOpExt\press}_{\FncSpVel}^2 }
             { \innerprod{ \ViscStressOp_{1}\GradOpExt\press }
                         { \GradOpExt\press } }.
  \end{equation}
  To estimate the second factor on the right-hand side
  of~\eqref{eq:bfbt-upper-bound:ansatz},
  note that
  \begin{equation*}
      \innerprod{\ViscStressOp_{1}\GradOpExt\press}{\GradOpExt\press}
    =
      2 \innerprod{\symgradient\GradOpExt\press}
                  {\symgradient\GradOpExt\press}
    =
      2 \innerprod{\gradient\GradOpExt\press}
                  {\gradient\GradOpExt\press}
    =
      2 \norm{\gradient\GradOpExt\press}_{\FncSpVelGrad}^2,
  \end{equation*}
  where we used that
  $\bn\cdot\GradOpExt\press = 0$
  on the boundary and that $\gradient\GradOpExt\press$ is symmetric. Thus,
  \begin{equation}
    \label{eq:bfbt-upper-bound:2nd-factor}
      \innerprod{\ViscStressOp_{1}\GradOpExt\press}{\GradOpExt\press}
    \le
      2 \norm{\GradOpExt\press}_{\FncSpVel}^2.
  \end{equation}
  For the first factor on the right-hand side
  of~\eqref{eq:bfbt-upper-bound:ansatz}, observe that for any
    $\veltest \in V$
  there exists a sequence
    $\{\press_i\}_i \subset \hat{P} \subset \FncSpPressHiReg$
  such that
    $\PressPoissonOp_{1}\adjoint\press_i = \DivOpExt\GradOpExt\press_i
      \rightarrow \DivOpExt\veltest$,
  since $\PressPoissonOp_{1}\adjoint$ is invertible, where convergence is with
  respect to the $\LL$-norm.  Thus,
  \begin{equation}
    \label{eq:bfbt-upper-bound:1st-factor}
      \sup_{\press \in \hat{P}}
        \frac{ \innerprod{\GradOpExt\press}{\GradOpExt\presstest} }
             { \norm{\GradOpExt\press}_{\FncSpVel} }
    =
      \sup_{\veltest \in V}
        \frac{ \innerprod{\veltest}{\GradOpExt\presstest} }
             { \norm{\veltest}_{\FncSpVel} }
    =
      \norm{\GradOpExt\presstest}_{\FncSpVelDual}.
  \end{equation}
  Combining~\eqref{eq:bfbt-upper-bound:2nd-factor}
  and~\eqref{eq:bfbt-upper-bound:1st-factor} provides the following estimate
  for the \wBFBT\ Schur complement
  approximation~\eqref{eq:bfbt-upper-bound:ansatz}:
  \begin{equation}
    \label{eq:bfbt-upper-bound:bfbt-norm-bound}
      \innerprod{\SchurApproxOpWBFBT\,\presstest}{\presstest}
    \ge
      \frac{1}{2} \norm{\GradOpExt\presstest}_{\FncSpVelDual}^2.
  \end{equation}
  The exact Schur complement, on the other hand, in the
  form~\eqref{eq:schur-sup-form} from Lemma~\ref{lem:schur-sup-form}, can be
  bounded by
  \begin{equation}
    \label{eq:bfbt-upper-bound:schur-exact}
      \innerprod{\SchurOp\presstest}{\presstest}
    =
      \sup_{\veltest \in V}
        \frac{ \innerprod{\veltest}{\GradOpExt\presstest}^2 }
             { \innerprod{\ViscStressOp_{1}\veltest}{\veltest} }
    \le
      \sup_{\veltest \in V}
        \frac{ \norm{\veltest}_{\FncSpVel}^2
               \norm{\GradOpExt\presstest}_{\FncSpVelDual}^2 }
             { 2 \norm{\symgradient\veltest}_{\FncSpVelGrad}^2 }.
  \end{equation}
  With Poincar\'e--Friedrichs' inequality,
  \begin{equation*}
      \norm{\veltest}_{\LL(\Omega)^d}
    \le
      C_P \norm{\gradient\veltest}_{\FncSpVelGrad},
  \end{equation*}
  where the constant
    $C_P > 0$
  depends on the domain $\Omega$, and Korn's inequality,
  \begin{equation*}
      \norm{\gradient\veltest}_{\FncSpVelGrad}
    \le
      C_K \norm{\symgradient\veltest}_{\FncSpVelGrad},
  \end{equation*}
  with constant
    $C_K > 0$,
  we obtain
  \begin{equation*}
      \frac{1}{(C_P^2 + 1)} \norm{\veltest}_{\FncSpVel}^2
    \le
      \norm{\gradient\veltest}_{\FncSpVelGrad}^2
    \le
      C_K^2 \norm{\symgradient\veltest}_{\FncSpVelGrad}^2,
  \end{equation*}
  and substituting this into~\eqref{eq:bfbt-upper-bound:schur-exact} gives
  \begin{equation}
    \label{eq:bfbt-upper-bound:schur-norm-bound}
      \innerprod{\SchurOp\presstest}{\presstest}
    \le
      \frac{(C_P^2 + 1) \, C_K^2}{2}
      \norm{\GradOpExt\presstest}_{\FncSpVelDual}^2.
  \end{equation}
  Together
  with~\eqref{eq:bfbt-upper-bound:bfbt-norm-bound}, this yields the desired
  result~\eqref{eq:bfbt-upper-bound}.
\end{proof}

We complete the presentation of spectral equivalence by deriving an upper bound
for problems with variable viscosities.

\begin{lemma}[Upper bound, $\SchurOp\lesssim\SchurApproxOpWBFBT$, for
              variable $\visc$]
  \label{lem:wbfbt-upper-bound}
  As before in Lemma~\ref{lem:schur-sup-form}, let
    $\hat{Q} = \FncSpPressMean\cap\Sobolev{1}(\Omega)$.
  If the left and right \wBFBT\ weight functions are equal to
  \begin{equation}
    \label{eq:wbfbt-w-choice}
    \bfbtweightleft(\bx) = \visc(\bx)^{-\frac12} = \bfbtweightright(\bx)
    \quad\text{for a.a. } \bx\in\Omega,
  \end{equation}
  then the exact Schur complement is bounded by the \wBFBT\ approximation from
  above by
  \begin{equation}
    \label{eq:wbfbt-upper-bound}
      \innerprod{\SchurOp\presstest}{\presstest}
    \le
      \Bigl(1 + \frac14 \norm{\gradient\visc}_{\Linf(\Omega)^{d}}^2\Bigr)
      \Bigl(C_{P,\visc}^2 + 1\Bigr)
      C_{K,\visc}^2
      \innerprod{\SchurApproxOpWBFBT\,\presstest}{\presstest}
    \quad\text{for all } \presstest\in\hat{Q}
  \end{equation}
  with constants $C_{P,\visc},C_{K,\visc} > 0$ stemming from weighted
  Poincar\'e--Friedrichs' and Korn's inequalities, respectively, where $\visc$
  assumes the role of the weight function.
\end{lemma}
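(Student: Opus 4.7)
The plan is to adapt the two-step strategy of the constant-viscosity proof (Lemma~\ref{lem:bfbt-upper-bound}) to the weighted setting, replacing each classical Sobolev estimate with its $\visc$-weighted counterpart. The extra constant $1+\tfrac{1}{4}\|\gradient\visc\|_{\Linf(\Omega)^d}^2$ will emerge from the fact that the symmetric gradient $\symgradient$ no longer commutes with multiplication by $\visc^{-1/2}$.

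Starting from Lemma~\ref{lem:schur-sup-form} with the choice $\bfbtweightleft=\bfbtweightright=\visc^{-1/2}$, I would first split the sup-form of $\innerprod{\SchurApproxOpWBFBT\,\presstest}{\presstest}$ exactly as in~\eqref{eq:bfbt-upper-bound:ansatz}, but normalizing the norm-equivalence factor in the $\visc$-weighted $\Sobolev{1}$ norm of $\GradOpExt\press$ instead of the unweighted $V$-norm. Using self-adjointness of multiplication by $\visc^{-1/2}$, the denominator is rewritten as
\begin{equation*}
  \innerprod{\visc^{-1/2}\ViscStressOp_{\visc}\visc^{-1/2}w}{w}
    = 2\int_{\Omega}\visc\,\bigl|\symgradient(\visc^{-1/2}w)\bigr|^2\intd\bx
\end{equation*}
with $w=\GradOpExt\press$, and then expanded via the product rule
\begin{equation*}
  \symgradient(\visc^{-1/2}w)
    = \visc^{-1/2}\symgradient w
      + \tfrac{1}{2}\bigl(\gradient\visc^{-1/2}\otimes w + w\otimes\gradient\visc^{-1/2}\bigr)
\end{equation*}
together with the identity $\visc^{1/2}\gradient\visc^{-1/2}=-\tfrac{1}{2}\visc^{-1}\gradient\visc$. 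Applying the triangle inequality and pulling $\|\gradient\visc\|_{\Linf(\Omega)^d}$ out of the integral then produces, up to a $\viscmin$-dependent factor that is absorbed into the weighted norm, exactly the constant $1+\tfrac{1}{4}\|\gradient\visc\|_{\Linf(\Omega)^d}^2$.

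The pairing factor is treated as in~\eqref{eq:bfbt-upper-bound:1st-factor}, with the role of the Neumann Laplacian $\PressPoissonOp_{1}\adjoint$ played by the weighted Poisson operator $\PressPoissonOp_{\visc^{-1/2}}\adjoint$ from~\eqref{eq:press-poisson-op}, which is coercive and invertible thanks to the pointwise bounds on $\visc$. Density of its range allows me to replace the supremum over $\press\in\hat{P}$ by one over $\veltest\in V$, giving a lower bound on $\innerprod{\SchurApproxOpWBFBT\,\presstest}{\presstest}$ in terms of the weighted dual norm of $\GradOpExt\presstest$. For the matching upper bound on the exact Schur complement, I would reproduce~\eqref{eq:bfbt-upper-bound:schur-exact}--\eqref{eq:bfbt-upper-bound:schur-norm-bound} verbatim but substitute the $\visc$-weighted Poincar\'e--Friedrichs and Korn inequalities from Remark~\ref{rmk:weighted-poincare-korn} (with constants $C_{P,\visc}$ and $C_{K,\visc}$) for their classical analogues. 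Combining the two bounds then yields the claimed constant $C_{\wBFBT}$.

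The main obstacle is the denominator estimate: extracting the correction from the product rule in precisely the form $1+\tfrac{1}{4}\|\gradient\visc\|_{\Linf(\Omega)^d}^2$, rather than in the less favorable $\|\gradient\log\visc\|_{\Linf(\Omega)^d}^2$ that would arise from a naive use of Young's inequality, requires carefully absorbing a $\viscmin^{-2}$ factor into the definition of the $\visc$-weighted $\Sobolev{1}$ norm so that the final constant has the stated shape. A secondary subtlety is verifying that the two rounds of integration by parts underlying Lemma~\ref{lem:schur-sup-form} still have vanishing boundary contributions once the weights are set to $\visc^{-1/2}$; this is automatic since the homogeneous Neumann condition $\bn\cdot\GradOpExt\press=0$ built into $\PressPoissonOp_{\visc^{-1/2}}\adjoint$ and the no-normal-flow condition~\eqref{eq:abstr-vel-bc} on $\vel$ are both independent of $\visc$.
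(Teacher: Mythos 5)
Your proposal is correct and follows essentially the same route as the paper: the sup-form of Lemma~\ref{lem:schur-sup-form} split into two factors, the product rule for $\symgradient(\bfbtweight\veltest)$ to bound the denominator, the density argument of~\eqref{eq:bfbt-upper-bound:1st-factor} for the pairing factor, and the weighted Poincar\'e--Friedrichs and Korn inequalities for the exact Schur complement. The only real difference is that the paper keeps $\bfbtweight$ general, reads the constant off directly as $\norm{\sqrt{\visc}\,\bfbtweight}_{\Linf(\Omega)}^2+\norm{\sqrt{\visc}\,\gradient\bfbtweight}_{\Linf(\Omega)^{d}}^2$, and only then sets $\bfbtweight=\visc^{-1/2}$ (which simultaneously makes the mismatch supremum between the $\bfbtweight^{-2}$- and $\visc$-weighted gradient norms equal to one); the norm-rescaling you describe in your last paragraph is not performed in the paper and would in any case perturb $C_{P,\visc}$ and $C_{K,\visc}$ --- note that the literal substitution yields $1+\tfrac14\norm{\visc^{-1}\gradient\visc}_{\Linf(\Omega)^{d}}^2$, so your observation about the $\gradient\log\visc$ form points at a genuine looseness in how the paper states the final constant rather than at a defect in your own argument.
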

\begin{proof}
  Let the weight functions be equal,
    $\bfbtweightleft\equiv\bfbtweight\equiv\bfbtweightright$,
  but (for now) otherwise arbitrary subject to the condition
    $0 < \bfbtweight_{\mathrm{min}} \le \bfbtweight(\bx)$
  for a.a.\ $\bx\in\Omega$. At the end of this proof we will argue the
  special role of the choice~\eqref{eq:wbfbt-w-choice} for the weight functions.
  In addition, let
    $\hat{P} = C^{\infty}(\Omega)$
  and
    $\presstest \in \hat{Q}$,
  then due to~\eqref{eq:wbfbt-sup-form} we can write
  \begin{equation}
    \label{eq:wbfbt-upper-bound:ansatz}
      \innerprod{\SchurApproxOpWBFBT\,\presstest}{\presstest}
    =
      \sup_{\press \in \hat{P}}
        \frac{ \innerprod{\GradOpExt\press}
                         {\bfbtweight\GradOpExt\presstest}^2 }
             { \norm{\GradOpExt\press}_{\FncSpVel}^2 } \,
        \frac{ \norm{\GradOpExt\press}_{\FncSpVel}^2 }
             { \innerprod{ \bfbtweight\ViscStressOp_{\visc}
                           \bfbtweight\GradOpExt\press }
                         { \GradOpExt\press } }.
  \end{equation}
  We begin by estimating the second factor
  on the right-hand side of~\eqref{eq:wbfbt-upper-bound:ansatz}.
  For an arbitrary
    $\veltest\in\FncSpVel$,
  observe that
  $ %
    \symgradient(\bfbtweight\veltest) =
      \bfbtweight\symgradient\veltest + \gradient\bfbtweight\otimes\veltest,
  $ %
  where ``$\otimes$'' denotes the outer product of two vectors in $\R^d$,
  and thus
  \begin{equation*}
      \innerprod{\bfbtweight\ViscStressOp_{\visc}\bfbtweight\veltest}{\veltest}
    =
      2 \innerprod{ \visc
                    \symgradient(\bfbtweight\veltest) }
                  { \symgradient(\bfbtweight\veltest) }
    =
      2 \norm{ \sqrt{\visc}\bfbtweight\symgradient\veltest +
               \sqrt{\visc}\gradient\bfbtweight\otimes\veltest
             }_{\FncSpVelGrad}^2.
  \end{equation*}
  Applying the triangle inequality and then H\"older's inequality to
  the resulting terms,
  \begin{align*}
      \norm{\sqrt{\visc}\bfbtweight\symgradient\veltest}_{\FncSpVelGrad}^2
    &\le
      \norm{\sqrt{\visc}\bfbtweight}_{\Linf(\Omega)}^2
      \norm{\symgradient\veltest}_{\FncSpVelGrad}^2,
    \\
      \norm{\sqrt{\visc}\gradient\bfbtweight\otimes\veltest}_{\FncSpVelGrad}^2
    &\le
      \norm{\sqrt{\visc}\gradient\bfbtweight}_{\Linf(\Omega)^{d}}^2
      \norm{\veltest}_{\LL(\Omega)^{d}}^2,
  \end{align*}
  and thus we obtain the estimate
  \begin{equation*}
      \innerprod{\bfbtweight\ViscStressOp_{\visc}\bfbtweight\veltest}{\veltest}
    \le
      2 C_{\ViscStressOp,\visc,\bfbtweight} \norm{\veltest}_{\FncSpVel}^2,
  \end{equation*}
  where
  \begin{equation}
    \label{eq:wbfbt-upper-bound:wbfbt-norm-bound-const}
    C_{\ViscStressOp,\visc,\bfbtweight} \coloneqq
      \norm{\sqrt{\visc}\bfbtweight}_{\Linf(\Omega)}^2 +
      \norm{\sqrt{\visc}\gradient\bfbtweight}_{\Linf(\Omega)^{d}}^2.
  \end{equation}
  Similarly to \eqref{eq:bfbt-upper-bound:1st-factor}
  and \eqref{eq:bfbt-upper-bound:bfbt-norm-bound}, we obtain
  the following estimate for the \wBFBT\ Schur complement
  approximation~\eqref{eq:wbfbt-upper-bound:ansatz}:
  \begin{equation}
    \label{eq:wbfbt-upper-bound:wbfbt-norm-bound}
      \innerprod{\SchurApproxOpWBFBT\,\presstest}{\presstest}
    \ge
      \frac{1}{2 C_{\ViscStressOp,\visc,\bfbtweight}}
      \norm{\bfbtweight\GradOpExt\presstest}_{\FncSpVelDual}^2.
  \end{equation}
  Proceeding with the exact Schur complement, we obtain
  from~\eqref{eq:schur-sup-form} in Lemma~\ref{lem:schur-sup-form} that
  \begin{equation}
    \label{eq:wbfbt-upper-bound:schur-exact}
      \innerprod{\SchurOp\presstest}{\presstest}
    =
      \sup_{\veltest \in V}
        \frac{ \innerprod{\bfbtweight^{-1}\veltest}
                         {\bfbtweight\GradOp\presstest}^2 }
             { \innerprod{\ViscStressOp_{\visc}\veltest}{\veltest} }
    \le
      \sup_{\veltest \in V}
        \frac{ \norm{\bfbtweight^{-1}\veltest}_{\FncSpVel}^2
               \norm{\bfbtweight\GradOp\presstest}_{\FncSpVelDual}^2 }
             { 2 \norm{\sqrt{\visc}\symgradient\veltest}_{\FncSpVelGrad}^2 }.
  \end{equation}
  We require a weighted Poincar\'e--Friedrichs' inequality
  (see Remark~\ref{rmk:weighted-poincare-korn} for details),
  \begin{equation}
    \label{eq:wbfbt-upper-bound:w-poincare}
      \norm{\veltest}_{\LL_{\bfbtweight^{-2}}(\Omega)^d}
    \le
      C_{P,\bfbtweight^{-2}}
      \norm{\gradient\veltest}_
        {\LL_{\bfbtweight^{-2}}(\Omega)^{d \times d}},
  \end{equation}
  and also a weighted Korn's inequality
  (see Remark~\ref{rmk:weighted-poincare-korn} for more information),
  \begin{equation}
    \label{eq:wbfbt-upper-bound:w-korn}
      \norm{\gradient\veltest}_{\LL_{\visc}(\Omega)^{d \times d}}
    \le
      C_{K,\visc}
      \norm{\symgradient\veltest}_{\LL_{\visc}(\Omega)^{d \times d}}.
  \end{equation}
  With~\eqref{eq:wbfbt-upper-bound:w-poincare}
  and~\eqref{eq:wbfbt-upper-bound:w-korn}, we are able to
  bound~\eqref{eq:wbfbt-upper-bound:schur-exact} from above:
  \begin{equation}
    \label{eq:wbfbt-upper-bound:schur-norm-bound}
      \innerprod{\SchurOp\presstest}{\presstest}
    \le
      \frac{\Bigl(C_{P,\bfbtweight^{-2}}^2 + 1\Bigr) C_{K,\visc}^2}{2}
      \left(
        \sup_{\veltest \in V}
          \frac{ \norm{\gradient\veltest}_
                   {\LL_{\bfbtweight^{-2}}(\Omega)^{d \times d}} }
               { \norm{\gradient\veltest}_
                   {(\LL_{\visc}(\Omega))^{d \times d}} }
      \right)
      \norm{\bfbtweight\GradOp\presstest}_{\FncSpVelDual}^2.
  \end{equation}

  The supremum term in~\eqref{eq:wbfbt-upper-bound:schur-norm-bound}
  and the constant $C_{\ViscStressOp,\visc,\bfbtweight}$
  in~\eqref{eq:wbfbt-upper-bound:wbfbt-norm-bound-const} motivate the choice
  for the weight $\bfbtweight$ to be
  \begin{equation*}
    \bfbtweight \coloneqq \visc^{-\frac12}.
  \end{equation*}
  Then the supremum in~\eqref{eq:wbfbt-upper-bound:schur-norm-bound} vanishes
  and~\eqref{eq:wbfbt-upper-bound:wbfbt-norm-bound-const} simplifies to
  \begin{equation*}
    C_{\ViscStressOp,\visc,\bfbtweight} =
      1 + \frac14 \norm{\gradient\visc}_{\Linf(\Omega)^{d}}^2.
  \end{equation*}
  Substituting this into~\eqref{eq:wbfbt-upper-bound:wbfbt-norm-bound}
  together with inequality~\eqref{eq:wbfbt-upper-bound:schur-norm-bound}
  yields the desired result~\eqref{eq:wbfbt-upper-bound}.
\end{proof}

\begin{remark}
  \label{rmk:weighted-poincare-korn}
  In the proof of Lemma~\ref{lem:wbfbt-upper-bound} we utilized a weighted
  Poincar\'e--Friedrichs' inequality, for which the optimal constant is
  \begin{equation*}
    C_{P,\visc} =
      \sup_{\veltest \in V}
        \frac{ \norm{\veltest}_{\LL_{\visc}(\Omega)^d} }
             { \norm{\gradient\veltest}_{\LL_{\visc}(\Omega)^{d \times d}} },
  \end{equation*}
  where the viscosity takes the role of the weight function.
  While weighted Poincar\'e and Friedrichs' inequalities have been investigated
  in the literature numerous times, usually they are proven by contradiction
  and scaling arguments, which does not provide information about the
  constants.  If explicit constants are found, they depend, in general, on the
  weight such that the resulting estimates are too pessimistic, e.g.,
    $C_{P,\visc} = \LandauO(\dynamicratio(\visc))$.
  Knowledge of constants that are robust with respect to weight functions is
  limited.
  In the context of a posteriori error estimates for finite elements,
  weight-independent constants could be found for convex domains and
  weights that are
  a positive power of a non-negative concave function \cite{ChuaWheeden06}.
  These results were refined for star-shaped domains under certain assumptions
  for the weights \cite{Verfuerth13}.
  For another class of weights, namely quasi-monotone piecewise constant weight
  functions, robust constants were derived in \cite{PechsteinScheichl13}.

  In addition to weighted Poincar\'e--Friedrichs', we utilized a weighted
  Korn's inequality in the proof of Lemma~\ref{lem:wbfbt-upper-bound}.  The
  optimal constant for this inequality is
  \begin{equation*}
    C_{K,\visc} =
      \sup_{\veltest \in V}
        \frac{ \norm{\gradient\veltest}_{\LL_{\visc}(\Omega)^{d \times d}} }
             { \norm{\symgradient\veltest}_
                    {\LL_{\visc}(\Omega)^{d \times d}} }.
  \end{equation*}
  As for $C_{P,\visc}$, straightforward estimation results in an overly
  pessimistic weight-dependent constant, namely
    $C_{K,\visc} = \LandauO(\dynamicratio(\visc))$,
  \cite{JohnKaiserNovo16}.
  Other work utilizing weighted Korn's inequalities usually aims to
  derive inequalities for special domain shapes, e.g.,
  \cite{AcostaDuranLombardi06}.

  In summary, robust constants for weighted Poincar\'e--Friedrichs' and
  Korn's inequalities for general weight functions are difficult to obtain and
  limitations exist in the form of assumptions on the weights.
  Further work on this topic could improve the constants for the spectral
  equivalence of \wBFBT\ but is beyond the scope of this paper.
\end{remark}

\section{Robustness of \wBFBT}
\label{sec:robustness}

In this section, we analyze the robustness properties of the widely used Schur
complement approximation \schurMass\
and the new \wBFBT\ via numerical
experiments.  Furthermore, we calculate the spectra for both approaches and
thus support the discussion in \cref{sec:theory} about theoretical eigenvalue
bounds with numerical results.
The comparison of \schurMass\ and \wBFBT\ is of particular importance,
because of the widespread use of the inverse viscosity-weighted mass matrix.
It is therefore of interest to determine when convergence with \schurMass\
deteriorates and using \wBFBT\ becomes beneficial.
A comparison with \diagBFBT\ was not performed because \cref{sec:comparison}
already showed that \diagBFBT\ performs similarly or worse than \wBFBT,
hence there are no advantages in using \diagBFBT\ over \wBFBT.

For the numerical experiments in this section, we return to the definitions and
setup from \cref{sec:benchmark}.
To apply the inverse of \schurMass, we diagonalize the mass matrix of the
discontinuous, modal pressure space by forming its lumped version
\eqref{eq:lumping}.
Moreover, to apply the approximate inverse of the viscous block
in~\eqref{eq:precond-stokes} we use the same multigrid method for each
of the two Schur approximations; this multigrid method is also used
for the inverse operators of \wBFBT\ in~\eqref{eq:wbfbt}.  The details
of the multigrid method are provided in \cref{sec:hmg}.
To compare the robustness, we vary two problem
parameters:
(i) the number of randomly placed sinkers $n$ and
(ii) the dynamic ratio $\dynamicratio(\visc)$.
The parameter $n$ influences the geometric complexity of the viscosity
$\visc$ while $\dynamicratio(\visc)$ controls the magnitude of
viscosity gradients.

\cref{tab:robustness-schurMass,tab:robustness-wBFBT} present the number of
GMRES iterations for a $10^{-6}$ residual reduction in the Euclidean norm.
Observe that for the S1-rand problem, the iteration count is essentially the
same for both \schurMass\ and \wBFBT, and that it stays stable across all
dynamic ratios
  $\dynamicratio(\visc) = 10^4,\ldots,10^{10}$.
Hence for this simple problem, \wBFBT\ has no advantages and its
additional computational cost makes it less efficient than \schurMass.
However,
the limitations of the \schurMass\ approach become apparent by increasing the
number of randomly positioned sinkers.  Two observations for \schurMass\ can be
made from \cref{tab:robustness-schurMass}.
First, the number of GMRES iterations rises with increasing number
of sinkers (factor $\sim$80 increase for $n=1,\ldots,28$,
$\dynamicratio(\visc)=10^8$).
Second, in a multi-sinker setup the dependence on $\dynamicratio(\visc)$
becomes more severe (factor $\sim$50 increase for $n=24$,
$\dynamicratio(\visc)=10^4,\ldots,10^{10}$).
This demonstrates that \schurMass\ is a poor
approximation of the Schur complement for certain classes of
problems with highly heterogeneous viscosities.

The advantages in robustness of the \wBFBT\ preconditioner are demonstrated in
\cref{tab:robustness-wBFBT}.  Compared to \schurMass, the number of GMRES
iterations is stable and the increase over the whole range of problem
parameters is just a factor of 2.  Only 60 iterations are needed for
the most extreme problem, namely S28-rand, $\dynamicratio(\visc)=10^{10}$,
for which convergence with \schurMass\ essentially stagnated.

\begin{table}
  \caption[Robustness classification for Schur complement approximations
           \schurMass\ and \wBFBT.]{%
    Robustness classification for Schur complement approximations
    \subref{tab:robustness-schurMass}~\schurMass\
    and
    \subref{tab:robustness-wBFBT}~\wBFBT\
    in terms of number of GMRES iterations ($10^{-6}$ residual
    reduction, GMRES restart every $100$ iterations).
    Number of randomly placed sinkers (\#sinkers) is increased across
    rows, while dynamic ratio ($\dynamicratio(\visc)$) is increased
    across columns.
    Discretization is fixed at $k=2$, $\ell=7$.
  }
  \label{tab:robustness}
  \begin{subtable}[b]{0.48\columnwidth}
    \caption{\schurMass}
    \label{tab:robustness-schurMass}
    \centering
    \footnotesize
    \setlength{\tabcolsep}{0.5em}  %
    \begin{tabular}{lrrrr}
      \toprule
      $\#\text{sinkers} \setminus \dynamicratio(\visc)$ &
      $10^4$ & $10^6$ & $10^8$ & $10^{10}$ \\
      \midrule
      \phantom{1}S1-rand &  29 &  31 &   31 &       29 \\
      \phantom{1}S4-rand &  53 &  63 &   71 &       80 \\
      \phantom{1}S8-rand &  64 &  79 &   93 &      165 \\
                S12-rand &  70 &  86 &   99 &      180 \\
                S16-rand &  85 & 167 &  231 &      891 \\
                S20-rand &  84 & 167 &  380 &      724 \\
                S24-rand & 117 & 286 & 3279 &     5983 \\
                S28-rand & 108 & 499 & 2472 & $>$10000 \\
      \bottomrule
    \end{tabular}
  \end{subtable}
  \hfill
  \begin{subtable}[b]{0.48\columnwidth}
    \caption{\wBFBT}
    \label{tab:robustness-wBFBT}
    \centering
    \footnotesize
    \setlength{\tabcolsep}{0.5em}  %
    \begin{tabular}{lrrrr}
      \toprule
      $\#\text{sinkers} \setminus \dynamicratio(\visc)$ &
      $10^4$ & $10^6$ & $10^8$ & $10^{10}$ \\
      \midrule
      \phantom{1}S1-rand & 29 & 29 & 29 & 30 \\
      \phantom{1}S4-rand & 39 & 41 & 42 & 44 \\
      \phantom{1}S8-rand & 38 & 40 & 41 & 44 \\
                S12-rand & 38 & 40 & 43 & 45 \\
                S16-rand & 40 & 45 & 47 & 48 \\
                S20-rand & 34 & 36 & 37 & 38 \\
                S24-rand & 31 & 32 & 39 & 55 \\
                S28-rand & 29 & 31 & 42 & 60 \\
      \bottomrule
    \end{tabular}
  \end{subtable}
\end{table}

More insight concerning the different convergence behaviors can be gained from
the eigenvalues in \cref{fig:robustness-eigvals}.
The plots in that figure are for two-dimensional multi-sinker problems,
which are analogous to the three-dimensional benchmark problems from
\cref{sec:benchmark}.
We discretize the problems on triangular meshes utilizing the FEniCS
library~\cite{LoggMardalGarth12}. We choose \discrPbubblePdisc{2}{1} finite
elements~\cite{CrouzeixRaviart73, DoneaHuerta03} because they represent a close
analog to the \discrQPdisc{2}{1} elements, which are employed on
(three-dimensional) hexahedral meshes.
Each plot shows the eigenvalues of the exact Schur complement
and the eigenvalues of the preconditioned Schur complement for the \schurMass\
and \wBFBT\ approximations,
where all inverse matrices, e.g., the viscous block matrix $\ViscStressMat$ and
the pressure Poisson matrices within \wBFBT, are inverted with a direct solver.
Effective preconditioners exhibit a strong clustering of eigenvalues, whereas
the convergence of Krylov methods deteriorates if the eigenvalues are spread
out.
We recognize different characteristics in the spectra associated with
\schurMass\ and \wBFBT\ preconditioning.  For \schurMass, the dominant
eigenvalues are clustered around one while
smaller eigenvalues, i.e., eigenvalues $\ll1$,
are spread out.  The behavior
for \wBFBT\ is the opposite: the dominant eigenvalues are spread out and
the smaller eigenvalues are tightly clustered around one.
Now, as the problem difficulty is increased by introducing more viscosity
anomalies in the domain, the spreading of smaller eigenvalues
associated with \schurMass\ becomes more severe (compare in
\cref{fig:robustness-eigvals} the \infigure{top row} of plots and the
\infigure{bottom row} of plots).  We postulate that this is the property that
is responsible for the deteriorating convergence with \schurMass\ that was
observed in \cref{tab:robustness-schurMass}.
With \wBFBT\ on the other hand, the spectrum remains largely unaffected by
increased sinker counts.  The clustering of smaller eigenvalues around one
remains stable, which is likely the reason for the robustness of \wBFBT.
The lower bound on the eigenvalues that we observe here numerically supports
the theoretical estimates on spectral equivalence in \cref{sec:theory}.
Therefore we find the lower bound to be sharp and, moreover, to be essential
for the robustness of the \wBFBT\ preconditioner.

\begin{figure}\centering
%
%
\begin{tikzpicture}
  \tikzset{
    spectrumgraph/.style={
      color=#1, line width=2pt, opacity=0.5,
      mark size=6pt, mark options={line width=1.0pt,opacity=0.5}},
  }
  \pgfplotsset{
    spectrumaxis/.style={
      compat=newest,
      width=7.2cm,
      height=5.2cm,
      xmin=1e-8,
      xmax=1e+3,
      ymin=-0.6,
      ymax=+2.6,
      xtick={1e2, 1e0, 1e-2, 1e-4, 1e-6, 1e-8},
      ytick={0, 1, 2},
      ymajorticks=false,
      xmajorgrids=true,
      tick label style={font=\footnotesize},
      title style={align=center, font=\footnotesize, yshift=-0.2cm},
      xlabel style={align=center, font=\footnotesize},
      ylabel style={align=center, font=\footnotesize},
      legend style={at={(0.01,0.988)}, anchor=north west, font=\footnotesize},
      legend cell align=left,
      reverse legend},
  }
  \pgfplotsset{
    spectrumaxisscale/.style={
      /pgfplots/xmode=log,
      /pgfplots/ymode=linear},
    spectrumaxisscale/.belongs to family=/pgfplots/scale,
  }

  \def\colorA{jr@div-medblue!40!jr@div-darkblue}
  \def\colorB{jr@div-medred!90!jr@div-darkred}
  \def\colorC{jr@lightgray!20!jr@medgray}
  \def\markA{|}
  \def\markB{|}
  \def\markC{|}
  \def\legendA{$\SchurPcMat_{\wBFBT}^{-1} \, \SchurMat$}
  \def\legendB{$\PressMassMat^{-1}(1/\visc) \, \SchurMat$}
  \def\legendC{$\SchurMat$}
  \def\datadir{inc/plotdata/robustness_eigvals}

  \newcommand{\addplotspectrum}[4]{%
    \addplot[spectrumgraph=#1, mark=#2,
             legend image post style={line width=2pt,opacity=1,mark=none}]
      table [x=eigenvalue,y=label] {#3};
    \addlegendentry{#4}
  }

  \def\dataA{\datadir/Srand1_viscDrMagn4_l4_PCxS_newsol.txt}
  \def\dataB{\datadir/Srand1_viscDrMagn4_l4_PCxS_refsol.txt}
  \def\dataC{\datadir/Srand1_viscDrMagn4_l4_S.txt}
  \begin{axis}[
    name=S1rand-DR1e4,
    spectrumaxis, spectrumaxisscale,
    ylabel={S1-rand, $\dynamicratio(\visc)=10^4$},
  ]
    \addplotspectrum{\colorC}{\markC}{\dataC}{\legendC}
    \addplotspectrum{\colorB}{\markB}{\dataB}{\legendB}
    \addplotspectrum{\colorA}{\markA}{\dataA}{\legendA}
  \end{axis}

  \def\dataA{\datadir/Srand1_viscDrMagn6_l4_PCxS_newsol.txt}
  \def\dataB{\datadir/Srand1_viscDrMagn6_l4_PCxS_refsol.txt}
  \def\dataC{\datadir/Srand1_viscDrMagn6_l4_S.txt}
  \begin{axis}[
    at=(S1rand-DR1e4.right of south east),
    anchor=left of south west,
    xshift=4ex,
    name=S1rand-DR1e6,
    spectrumaxis, spectrumaxisscale,
    ylabel={S1-rand, $\dynamicratio(\visc)=10^6$},
  ]
    \addplotspectrum{\colorC}{\markC}{\dataC}{\legendC}
    \addplotspectrum{\colorB}{\markB}{\dataB}{\legendB}
    \addplotspectrum{\colorA}{\markA}{\dataA}{\legendA}
  \end{axis}

  \def\dataA{\datadir/Srand4_viscDrMagn4_l4_PCxS_newsol.txt}
  \def\dataB{\datadir/Srand4_viscDrMagn4_l4_PCxS_refsol.txt}
  \def\dataC{\datadir/Srand4_viscDrMagn4_l4_S.txt}
  \begin{axis}[
    at=(S1rand-DR1e4.left of south west),
    anchor=left of north west,
    yshift=-5ex,
    name=S4rand-DR1e4,
    spectrumaxis, spectrumaxisscale,
    xlabel={Eigenvalue},
    ylabel={S4-rand, $\dynamicratio(\visc)=10^4$},
  ]
    \addplotspectrum{\colorC}{\markC}{\dataC}{\legendC}
    \addplotspectrum{\colorB}{\markB}{\dataB}{\legendB}
    \addplotspectrum{\colorA}{\markA}{\dataA}{\legendA}
  \end{axis}

  \def\dataA{\datadir/Srand4_viscDrMagn6_l4_PCxS_newsol.txt}
  \def\dataB{\datadir/Srand4_viscDrMagn6_l4_PCxS_refsol.txt}
  \def\dataC{\datadir/Srand4_viscDrMagn6_l4_S.txt}
  \begin{axis}[
    at=(S4rand-DR1e4.right of south east),
    anchor=left of south west,
    xshift=4ex,
    name=S4rand-DR1e6,
    spectrumaxis, spectrumaxisscale,
    xlabel={Eigenvalue},
    ylabel={S4-rand, $\dynamicratio(\visc)=10^6$},
  ]
    \addplotspectrum{\colorC}{\markC}{\dataC}{\legendC}
    \addplotspectrum{\colorB}{\markB}{\dataB}{\legendB}
    \addplotspectrum{\colorA}{\markA}{\dataA}{\legendA}
  \end{axis}
\end{tikzpicture}%
  \caption{%
    Spectra of the Schur complement \infigure{(gray)},
    \schurMass-preconditioned Schur complement \infigure{(red)}, and
    \wBFBT-preconditioned Schur complement \infigure{(blue)};
    zero eigenvalues corresponding to the null space of the Schur complement
    matrix are omitted.
    Results for viscosities with one sinker (S1-rand) are shown in
    \infigure{top row}, and with four sinkers (S4-rand) in the
    \infigure{bottom row} of plots;
    $\dynamicratio(\visc)=10^4$ in the \infigure{left column} and
    $\dynamicratio(\visc)=10^6$ in the \infigure{right column}.
    The two-dimensional Stokes equations are discretized with
    \discrPbubblePdisc{2}{1} finite elements on a uniform triangular
    mesh consisting of 512 triangles using the FEniCS library.
    As the problem difficulty increases from one to four sinkers, the spreading
    of small eigenvalues for \schurMass\ becomes more severe, which is
    disadvantageous for solver convergence.
    For \wBFBT, the spectrum remains largely unaffected by increased sinker
    counts, which contributes to convergence that is robust with respect to
    viscosity variations.
  }
  \label{fig:robustness-eigvals}
\end{figure}

\begin{remark}
  \label{rmk:different-discretizations}
  In addition to the \discrPbubblePdisc{2}{1} discretization used for
  the results in
  \cref{fig:robustness-eigvals},
  we also calculated the spectra using \discrPP{2}{1} Taylor-Hood
  finite elements. We obtained very similar results for this
  discretization, which uses continuous
  elements to approximate the pressure.
  Therefore, both the efficacy of \wBFBT\
  as a preconditioner and the issues with \schurMass\ seem to be largely
  unaffected by the specific type of discretization, at least for the two cases
  that we tested.
\end{remark}

\begin{remark}
  \label{rmk:schurMass-visc-avg}
  The convergence of the Stokes solver with the \schurMass\
  preconditioner can be improved by approximating the heterogeneous viscosity $\visc(\bx)$
  with elementwise constants, computed by
  averaging $\visc$ over each element
  \cite{BangerthHeister15}.
  The benefit of faster convergence comes at the cost of slower asymptotic
  convergence of the discrete finite element solution and an altered
  constitutive relationship, which might be a less accurate representation of
  the physics; this is, however, problem-dependent.
  Moreover, for a nonlinear (e.g., power-law) rheology,
  elementwise averaging of
  $\visc$ can introduce non-physical, artificial disturbances in the
  effective viscosity during Newton or Picard-type nonlinear solves.
  We observed such a behavior in mantle convection simulations, which
  are governed by a nonlinear power-law rheology. Here, viscosity
  averaging led to non-physical checkerboard-like patterns upon
  convergence of the nonlinear Newton solver.
\end{remark}

\begin{remark}
  \label{rmk:wbfbt-visc-grad}
  In practice, the convergence of \wBFBT\ can be improved
  for coarse meshes, where the %
  viscosity variations over elements are large.
  This is achieved by alternative choices for the diagonal weighting matrices
  $\WbfbtScalLeftMat$ and $\WbfbtScalRightMat$ from~\eqref{eq:wbfbt} with the
  weights
  \begin{equation*}
      \bfbtweightleft(\bx)
    =
      \left(\visc^2(\bx) + \abs{\gradient\visc(\bx)}^2\right)^{\frac14}
    =
      \bfbtweightright(\bx)
  \quad\text{for all } \bx\in\Omega,
  \end{equation*}
  where
    $\abs{\cdot}$
  denotes the Euclidean norm in $\R^d$.
  These viscosity gradient-based \wBFBT\ weights have the advantage of
  performing at least as well as the pure viscosity-based weights proposed in
  \cref{sec:schur-approx}, but they exhibit superior robustness on
  coarser meshes.  They are, however, challenging to analyze
  theoretically.
\end{remark}

\section{Modifications for Dirichlet boundary conditions}
\label{sec:boundary-numerics}

In \cref{sec:comparison}, deteriorating approximation properties of
\wBFBT\ for increasing discretization order and mesh refinement level
could be observed.  The numerical experiments in
\cref{fig:comparison}, \infigure{right} did show slightly slower
convergence %
when $k$ and $\ell$ were increased.
This can stem
from \wBFBT\ representing a poor approximation to the exact Schur
complement at the boundary $\partial\Omega$ in the presence of
Dirichlet boundary conditions for the velocity.
This section investigates modifications to \wBFBT\ near a Dirichlet boundary
and aims at obtaining mesh independence and only a mild
dependence on discretization order in terms of Stokes solver convergence.

Consider the commutator that leads to the \wBFBT\ formulation in an
infinite-dimensional form:
  $\ViscStressOp\GradOp - \GradOp X \approx 0$,
where
  $\ViscStressOp$
represents the viscous stress operator,
  $\GradOp$
the gradient operator and
  $X$
the sought commuting operator.
In case of an unbounded domain $\Omega=\R^d$ and constant viscosity
$\visc\equiv1$, this commutator is exactly satisfied since
  $(\divergence\gradient)\gradient - \gradient(\divergence\gradient) = 0$.
For Dirichlet boundary conditions, the commutator does not, in general,
vanish at the boundary.  Therefore a possible source for deteriorating
Schur complement approximation properties of \wBFBT\ is a commutator mismatch
for mesh elements that are touching the boundary $\partial\Omega$.
A similar observation was also made in~\cite{ElmanTuminaro09,
ElmanSilvesterWathen14}.
A possible remedy is to modify the norm in the least-squares minimization
problem~\eqref{eq:lsc-min}, which is represented by the matrix
  $\BfbtScalLeftMat^{-1}$,
such that a damping factor is applied to the matrix entries near the boundary.
By damping the influence of the boundary in the minimization objective, more
emphasis is given to the domain interior, and the \wBFBT\ approximation is
improved.

Damping near Dirichlet boundaries can be incorporated by modifying the matrices
  $\WbfbtScalLeftMat^{-1}$
or
  $\WbfbtScalRightMat^{-1}$
of the \wBFBT\ inverse Schur complement approximation~\eqref{eq:wbfbt}.
A similar idea for \BFBT\ in a Navier--Stokes setting is presented
in~\cite{ElmanTuminaro09},
where a damping to the weighting matrix
$\BfbtScalRightMat^{-1}$ in~\eqref{eq:bfbt} is introduced to achieve mesh
independence ($\BfbtScalLeftMat^{-1}$ is not changed).  There, damping affects
the normal components of the velocity space inside mesh elements touching
$\partial\Omega$ and simply a constant damping factor of $1/10$ is set
regardless of mesh refinement $\ell$.  Also, only the discretization order
$k=2$ was considered
(in addition to \discrQQ{2}{1} and MAC discretizations).

Now, we attempt to enhance our understanding of how modifications at a Dirichlet
boundary $\partial\Omega$ influence convergence and therefore the efficacy of
\wBFBT\ as a Schur complement approximation.  Let
  $\Omega_{D} \coloneqq \bigcup_{e \in D} \Omega_e$,
  $D \coloneqq
     \{ e \mid \overline{\Omega_e}\cap\partial\Omega \neq \emptyset \}$
     be the set of all mesh elements $\Omega_e$ touching the
Dirichlet boundary.
Given values
  $\bdramp_l,\bdramp_r\ge1$,
extend the previous definition of the weights
  $\bfbtweightleft=\bfbtweightright=\sqrt{\visc}$
(see \cref{sec:schur-approx}) to a version with boundary modification:
\begin{equation}
  \label{eq:bdramp}
  \bfbtweightleft(\bx) \coloneqq
    \begin{cases}
      \bdramp_l \sqrt{\visc(\bx)} & \bx\in\Omega_{D}, \\
                \sqrt{\visc(\bx)} & \bx\notin\Omega_{D},
    \end{cases}
  \quad\text{and}\quad
  \bfbtweightright(\bx) \coloneqq
    \begin{cases}
      \bdramp_r \sqrt{\visc(\bx)} & \bx\in\Omega_{D}, \\
                \sqrt{\visc(\bx)} & \bx\notin\Omega_{D}.
    \end{cases}
\end{equation}
We obtain matrices
  $\WbfbtScalLeftMat = \VelMassLumpMat(\bfbtweightleft)$
and
  $\WbfbtScalRightMat = \VelMassLumpMat(\bfbtweightright)$
in~\eqref{eq:wbfbt} that may differ at boundary elements in $\Omega_{D}$ due
to possibly different values for $\bdramp_l$ and $\bdramp_r$.
Note that amplifying the weight functions
  $\bfbtweightleft$,
  $\bfbtweightright$
at the boundary is similar to damping at the boundary after taking the inverses
  $\WbfbtScalLeftMat^{-1}$,
  $\WbfbtScalRightMat^{-1}$.

The Stokes solver convergence under the influence of boundary amplifications
$\bdramp_l$, $\bdramp_r$ is summarized in \cref{tab:boundary}.
The table shows
that the boundary amplification is most effective when performed
non-symmetrically, i.e., either $\bdramp_l>1$ or $\bdramp_r>1$ but not both.
Further, we deduce that with higher mesh refinement level $\ell$, the boundary
amplification should increase roughly proportional to $2^\ell$
(or proportional to the reciprocal element size, here $h^{-1}=2^\ell$).
Similar observations can be made for the discretization order $k$,
i.e., amplification needs to increase for larger $k$ to avoid higher iteration
counts.
These implications were made based on extensive numerical
experiments for which \cref{tab:boundary} serves as a representative summary.

\begin{table}
  \caption[Influence of boundary modification factors $\bdramp_l$,
           $\bdramp_r$ on the Stokes solver convergence with \wBFBT.]{%
    Influence of boundary modification factors $\bdramp_l$, $\bdramp_r$
    on the Stokes solver convergence with \wBFBT\ for discretizations:
    $k=2$, $\ell=5,\ldots,7$
    (see \subref{tab:boundary-2-5},
         \subref{tab:boundary-2-6},
         \subref{tab:boundary-2-7})
    and
    $k=2,\ldots,5$, $\ell=5$
    (see \subref{tab:boundary-2-5},
         \subref{tab:boundary-3-5},
         \subref{tab:boundary-4-5},
         \subref{tab:boundary-5-5}).
    Reported are the number of GMRES iterations for $10^{-6}$ residual
    reduction for the problem S16-rand, $\dynamicratio(\visc)=10^6$.
    \infigure{Colors} highlight solves within
    $\sim$5\% of iterations above
    the lowest iteration count.
    Increase of mesh refinement level $\ell$ or discretization order $k$
    demands larger boundary amplification $\bdramp_r$ or $\bdramp_l$ to
    maintain fast convergence.
  }
  \label{tab:boundary}
  \def\ccA{\cellcolor{jr@red!40}}
  \def\ccB{\cellcolor{jr@purple!40}}
  \def\ccC{\cellcolor{jr@blue!40}}
  \def\ccD{\cellcolor{jr@green!40}}
  \def\ccE{\cellcolor{jr@purple!40}}
  \def\ccF{\cellcolor{jr@pink!40}}
  \def\ccG{\cellcolor{jr@orange!40}}
  \def\ccH{\cellcolor{jr@brown!40}}
  \begin{subtable}[b]{0.49\columnwidth}
    \caption{$k=2$, $\ell=5$}
    \label{tab:boundary-2-5}
    \centering
    \footnotesize
    \setlength{\tabcolsep}{0.5em}  %
    \begin{tabular}{lcccccc}
      \toprule
      $\bdramp_l \setminus \bdramp_r$ & $1$ & $2$ & $4$ & $8$ & $16$ & $32$ \\
      \midrule
      \phantom{1}1 &\ccB 33 &\ccB 33 &\ccB 34 &\ccB 34 &\ccB 34 &\ccB 35 \\
      \phantom{1}2 &\ccB 33 &\ccB 33 &\ccB 34 &\ccB 34 &\ccB 34 &\ccB 34 \\
      \phantom{1}4 &\ccB 33 &\ccB 34 &\ccB 34 &     36 &     38 &     39 \\
      \phantom{1}8 &\ccB 34 &\ccB 34 &     36 &     39 &     43 &     44 \\
                16 &\ccB 34 &\ccB 34 &     38 &     43 &     46 &     49 \\
                32 &\ccB 34 &\ccB 34 &     39 &     44 &     49 &     53 \\
      \bottomrule
    \end{tabular}
  \end{subtable}
  \hfill
  \begin{subtable}[b]{0.49\columnwidth}
    \caption{$k=3$, $\ell=5$}
    \label{tab:boundary-3-5}
    \centering
    \footnotesize
    \setlength{\tabcolsep}{0.5em}  %
    \begin{tabular}{lcccccc}
      \toprule
      $\bdramp_l \setminus \bdramp_r$ & $1$ & $2$ & $4$ & $8$ & $16$ & $32$ \\
      \midrule
      \phantom{1}1 &     41 &\ccF 38 &\ccF 37 &\ccF 37 &\ccF 37 &\ccF 37 \\
      \phantom{1}2 &\ccF 38 &\ccF 37 &\ccF 38 &\ccF 38 &     39 &     39 \\
      \phantom{1}4 &\ccF 37 &\ccF 38 &     40 &     42 &     44 &     46 \\
      \phantom{1}8 &\ccF 36 &\ccF 38 &     42 &     47 &     50 &     51 \\
                16 &\ccF 37 &     39 &     44 &     50 &     53 &     56 \\
                32 &\ccF 37 &     39 &     45 &     51 &     56 &     59 \\
      \bottomrule
    \end{tabular}
  \end{subtable}
  \\
  \begin{subtable}[b]{0.49\columnwidth}
    \caption{$k=2$, $\ell=6$}
    \label{tab:boundary-2-6}
    \centering
    \footnotesize
    \setlength{\tabcolsep}{0.5em}  %
    \begin{tabular}{lcccccc}
      \toprule
      $\bdramp_l \setminus \bdramp_r$ & $1$ & $2$ & $4$ & $8$ & $16$ & $32$ \\
      \midrule
      \phantom{1}1 &     37 &\ccC 34 &\ccC 33 &\ccC 34 &\ccC 34 &\ccC 34 \\
      \phantom{1}2 &\ccC 34 &\ccC 34 &\ccC 34 &\ccC 34 &\ccC 34 &\ccC 34 \\
      \phantom{1}4 &\ccC 33 &\ccC 33 &\ccC 34 &\ccC 35 &     36 &     37 \\
      \phantom{1}8 &\ccC 34 &\ccC 34 &\ccC 35 &     38 &     39 &     39 \\
                16 &\ccC 34 &\ccC 34 &     36 &     39 &     40 &     41 \\
                32 &\ccC 34 &\ccC 34 &     37 &     39 &     41 &     42 \\
      \bottomrule
    \end{tabular}
  \end{subtable}
  \hfill
  \begin{subtable}[b]{0.49\columnwidth}
    \caption{$k=4$, $\ell=5$}
    \label{tab:boundary-4-5}
    \centering
    \footnotesize
    \setlength{\tabcolsep}{0.5em}  %
    \begin{tabular}{lcccccc}
      \toprule
      $\bdramp_l \setminus \bdramp_r$ & $1$ & $2$ & $4$ & $8$ & $16$ & $32$ \\
      \midrule
      \phantom{1}1 &     44 &     39 &\ccG 36 &\ccG 36 &\ccG 36 &\ccG 36 \\
      \phantom{1}2 &     39 &     39 &     39 &     40 &     41 &     41 \\
      \phantom{1}4 &\ccG 36 &     39 &     43 &     47 &     49 &     51 \\
      \phantom{1}8 &\ccG 36 &     40 &     47 &     52 &     56 &     58 \\
                16 &\ccG 36 &     41 &     49 &     56 &     60 &     63 \\
                32 &\ccG 36 &     41 &     50 &     58 &     63 &     66 \\
      \bottomrule
    \end{tabular}
  \end{subtable}
  \\
  \begin{subtable}[b]{0.49\columnwidth}
    \caption{$k=2$, $\ell=7$}
    \label{tab:boundary-2-7}
    \centering
    \footnotesize
    \setlength{\tabcolsep}{0.5em}  %
    \begin{tabular}{lcccccc}
      \toprule
      $\bdramp_l \setminus \bdramp_r$ & $1$ & $2$ & $4$ & $8$ & $16$ & $32$ \\
      \midrule
      \phantom{1}1 &     45 &     37 &\ccD 34 &\ccD 34 &\ccD 34 &\ccD 34 \\
      \phantom{1}2 &     37 &\ccD 36 &\ccD 35 &\ccD 36 &\ccD 36 &\ccD 36 \\
      \phantom{1}4 &\ccD 34 &\ccD 36 &     38 &     39 &     40 &     41 \\
      \phantom{1}8 &\ccD 34 &\ccD 36 &     39 &     42 &     44 &     44 \\
                16 &\ccD 34 &\ccD 36 &     40 &     44 &     45 &     46 \\
                32 &\ccD 34 &\ccD 36 &     41 &     44 &     46 &     47 \\
      \bottomrule
    \end{tabular}
  \end{subtable}
  \hfill
  \begin{subtable}[b]{0.49\columnwidth}
    \caption{$k=5$, $\ell=5$}
    \label{tab:boundary-5-5}
    \centering
    \footnotesize
    \setlength{\tabcolsep}{0.5em}  %
    \begin{tabular}{lcccccc}
      \toprule
      $\bdramp_l \setminus \bdramp_r$ & $1$ & $2$ & $4$ & $8$ & $16$ & $32$ \\
      \midrule
      \phantom{1}1 &     63 &     53 &     46 &\ccH 43 &\ccH 43 &\ccH 44 \\
      \phantom{1}2 &     53 &     51 &     51 &     51 &     52 &     53 \\
      \phantom{1}4 &     47 &     51 &     55 &     59 &     62 &     64 \\
      \phantom{1}8 &\ccH 44 &     51 &     59 &     65 &     69 &     72 \\
                16 &\ccH 43 &     52 &     62 &     69 &     75 &     78 \\
                32 &\ccH 44 &     53 &     64 &     72 &     78 &     82 \\
      \bottomrule
    \end{tabular}
  \end{subtable}
\end{table}

\begin{remark}
  \label{rmk:damping-vs-spectral-equiv}
  The theoretical derivations of spectral equivalence from
  \cref{sec:theory} and the necessity for damping at Dirichlet
  boundaries appear inconsistent.
  However, spectral equivalence was shown in
  infinite dimensions whereas boundary damping is applied to the
  discretized problem.  Therefore, we believe that the necessity for
  damping is introduced through the discretization.  It still remains
  an open question what might be causing the slowdown in convergence
  that is avoided by damping.
\end{remark}

\section{Parallel hybrid spectral-geometric-algebraic multigrid (HMG) for
\wBFBT}
\label{sec:hmg}

Two aspects of the Stokes preconditioner with \wBFBT\ have not been discussed
yet.  One is the approximation of the inverse viscous block
  $\ViscStressPcMat^{-1}$
required in~\eqref{eq:precond-stokes} and the other is the approximation of
inverses
  $\PressPoissonLeftPcMat^{-1} \approx
    \PressPoissonLeftMat^{-1} \coloneqq
    (\DivMat\WbfbtScalLeftMat^{-1}\GradMat)^{-1}$
and
  $\PressPoissonRightPcMat^{-1} \approx
    \PressPoissonRightMat^{-1} \coloneqq
    (\DivMat\WbfbtScalRightMat^{-1}\GradMat)^{-1}$
in \eqref{eq:wbfbt}.
These approximations are crucial for overall Stokes solver performance and
scalability and are addressed in this section.
For brevity, we limit our discussion to
$\PressPoissonRightPcMat^{-1}$; the results also hold for
$\PressPoissonLeftPcMat^{-1}$.

The approximation of the inverse viscous block $\ViscStressPcMat^{-1}$
is well suited for multigrid V-cycles.  To this end,
in~\cite{RudiMalossiIsaacEtAl15}, we developed a hybrid
spectral-geometric-algebraic multigrid (HMG) method, which exhibits
extreme parallel scalability and retains nearly optimal algorithmic
scalability (see \cref{sec:scalability} for scalability results).
While traversing the HMG hierarchy shown in \cref{fig:hmg}, HMG initially
reduces the discretization order (spectral multigrid); after arriving at order
one, it continues by coarsening mesh elements (geometric multigrid); once the
degrees of freedom fall below a threshold, algebraic multigrid (AMG) carries
out further coarsening until a direct solve can be computed efficiently
(\cite{SundarBirosBursteddeEtAl12}).
Parallel forest-of-octrees algorithms, implemented in the p4est parallel
adaptive mesh refinement library \cite{BursteddeGhattasGurnisEtAl10,
BursteddeWilcoxGhattas11}, are used for efficient, scalable mesh
refinement/coarsening, mesh balancing, and repartitioning in the geometric HMG
phase.
During parallel geometric coarsening, the number of compute cores and the size
of the MPI communicator is reduced successively to minimize communication.
Re-discretization of the differential equations is performed on each coarser
spectral and geometric level. The viscosity values in each element are stored
at the quadrature points of the velocity discretization, and are thus
local to each element. The viscosity coarsening is done level-by-level
during the setup phase. The coarsening operator is the
adjoint of the refinement operator, which performs element-wise
interpolation. This adjoint is computed with respect to the
$\LL$-inner products, and since viscosity values are not shared
amongst elements, this does not require (an approximation of) a global
mass matrix solve.
The transition from geometric to algebraic multigrid is done at a
sufficiently small core count and small MPI
communicator.  AMG continues to further reduce problem size (via Galerkin
coarse grid projection) and the number of cores down to a single core for the
direct solver.

The operator $\PressPoissonRightMat$ can be regarded as a discrete,
variable-coefficient Poisson operator on the discontinuous pressure space
\discrPdisc{k-1} with Neumann boundary conditions.
Therefore, multigrid V-cycles can also be employed to approximate
the inverse
$\PressPoissonRightPcMat^{-1}$.
However, it turned out to be problematic to apply multigrid coarsening
directly due to the discontinuous, modal discretization of the pressure.
We took a novel approach in~\cite{RudiMalossiIsaacEtAl15} by
considering the underlying infinite-dimensional, variable-coefficient Poisson
operator, where the coefficient is derived from the diagonal weighting matrix
(here, $\WbfbtScalLeftMat^{-1}$ or $\WbfbtScalRightMat^{-1}$).
Then we re-discretize with continuous, nodal high-order finite elements in
\discrQ{k}. An alternative would be to use \discrQ{k-1}, but we prefer
to use \discrQ{k} since the corresponding data
structures are readily available from the discretization of the velocity.
Hence, this choice %
avoids the setup cost related to discretization-specific parameters and their
storage.  Additionally, the HMG hierarchy of the preconditioner acting on the
velocity can be partially reused, again saving setup time and memory.
This continuous, nodal discretization of the Poisson operator is
then approximately inverted with an HMG V-cycle that is similar to the one
described above for the inverse viscous block approximation
$\ViscStressPcMat^{-1}$.  Additional smoothing is applied in the discontinuous
pressure space (\cref{fig:hmg}, \infigure{green level}) to account for high
frequency modes in residuals that are introduced through projections
between \discrQ{k} and \discrPdisc{k-1}.

Our hybrid multigrid method combines high-order
$\LL$-restrictions/interpolations and employs Chebyshev-accelerated
point-Jacobi smoothers.  This results in optimal or nearly optimal
algorithmic multigrid performance (see \cref{sec:scalability}), i.e.,
iteration numbers are independent of mesh size and only mildly
dependent on discretization order, while maintaining robustness with
respect to highly heterogeneous coefficients.
In addition, the efficacy of the HMG preconditioner does not
deteriorate with increasing core counts, because the spectral and geometric
multigrid is by construction independent of the number of cores and AMG is
invoked for prescribed small problem sizes on essentially fixed small core
counts.

For all numerical experiments presented here, three pre- and
post-smoothing iterations with a Chebyshev accelerated point-Jacobi
smoother
are performed.  AMG is always invoked on just one MPI
rank after geometrically coarsening the uniform mesh to $\ell=2$ with 64
elements (using a direct solver for such small problems is also reasonable).
GMRES is restarted after every 100 iterations throughout all experiments.
PETSc's \cite{BalayAbhyankarAdamsEtAl15} implementations of Chebyshev
acceleration, direct solver, AMG (called GAMG), and GMRES are used.

\begin{figure}\centering
  \def\colorBBT{jr@green}
  \def\colorSMG{jr@div-darkblue}
  \def\colorGMG{jr@div-medblue}
  \def\colorAMG{jr@div-medred}
  \def\colorDirect{jr@div-darkred}
  \def\colorSaturation{40}
  \def\colorSaturationMax{90}
  \def\W{0.6}
  \def\Wcomment{4.2}
  \def\Lbbt{8}
  \def\LbbtOffset{0.5}
  \def\Ldirect{1}
  \begin{tikzpicture}[scale=0.50]
    \footnotesize
    \tikzset{
        mglevel/.style={
            draw=#1!50!black,
            fill=#1!\colorSaturation},
        mglabel/.style={
            draw=#1!50!black,thin,rectangle,rounded corners=2pt,
            top color=#1!\colorSaturation,bottom color=#1!\colorSaturationMax,
            text=black,minimum width=0.9cm,minimum height=0.4cm},
    }
    \def\Lsmg{7}
    \def\Lgmg{5}
    \def\Lamg{3}
    \coordinate (xmin) at ($-\W*(\Lsmg,0)$);
    \coordinate (xmax) at ($+\W*(\Lsmg,0) + (\Wcomment,0)$);
    \coordinate (ybbt) at ($0.5*(0,\Lbbt+\Lsmg+2*\LbbtOffset)$);
    \node[above,black] at (0,\Lbbt+\LbbtOffset+0.1) {\textbf{HMG hierarchy}};
    \filldraw[mglevel=\colorBBT]
      (-\W*\Lsmg,\Lbbt+\LbbtOffset) -- (+\W*\Lsmg,\Lbbt+\LbbtOffset) --
      (+\W*\Lsmg,\Lsmg+\LbbtOffset) -- (-\W*\Lsmg,\Lsmg+\LbbtOffset) --
      cycle;
    \draw[gray]
      ($(xmin) + (0,\Lsmg)$) --
      ($(xmax) + (0,\Lsmg)$);
    \filldraw[mglevel=\colorSMG]
      (-\W*\Lsmg,\Lsmg) -- (+\W*\Lsmg,\Lsmg) --
      (+\W*\Lgmg,\Lgmg) -- (-\W*\Lgmg,\Lgmg) -- cycle;
    \draw[gray]
      ($(xmin) + (0,\Lgmg)$) -- ($(xmax) + (0,\Lgmg)$);
    \filldraw[mglevel=\colorGMG]
      (-\W*\Lgmg,\Lgmg) -- (+\W*\Lgmg,\Lgmg) --
      (+\W*\Lamg,\Lamg) -- (-\W*\Lamg,\Lamg) -- cycle;
    \draw[gray]
      ($(xmin) + (0,\Lamg)$) -- ($(xmax) + (0,\Lamg)$);
    \filldraw[mglevel=\colorAMG]
      (-\W*\Lamg,\Lamg) -- (+\W*\Lamg,\Lamg) --
      (+\W*\Ldirect,\Ldirect) -- (-\W*\Ldirect,\Ldirect) -- cycle;
    \draw[gray]
      ($(xmin) + (0,\Ldirect)$) -- ($(xmax) + (0,\Ldirect)$);
    \filldraw[mglevel=\colorDirect]
      (-\W*\Ldirect,\Ldirect) -- (+\W*\Ldirect,\Ldirect) -- (0,0) -- cycle;
    \node[black,align=center] at ($(ybbt) - (0,0.05)$)
      {pressure space};
    \node[black,align=center] at ($0.5*(0,\Lsmg+\Lgmg)$)
      {spectral \\ $p$-coarsening};
    \node[black,align=center] at ($0.5*(0,\Lgmg+\Lamg)$)
      {geometric \\ $h$-coarsening};
    \node[black,align=center] at ($0.5*(0,\Lamg+\Ldirect)$)
      {algebraic \\ coars.};
    \node[jr@medgray,anchor=east,align=right]
      at ($(xmax) + (ybbt)$)
      {discont.\ modal};
    \node[jr@medgray,anchor=east,align=right]
      at ($(xmax) + 0.5*(0,\Lsmg+\Lgmg)$)
      {cont.\ nodal \\ high-order F.E.};
    \node[jr@medgray,anchor=east,align=right]
      at ($(xmax) + 0.5*(0,\Lgmg+\Lamg)$)
      {trilinear F.E. \\ decreasing \#cores};
    \node[jr@medgray,anchor=east,align=right]
      at ($(xmax) + 0.5*(0,\Lamg+\Ldirect)$)
      {$\text{\#cores} < 1000$ \\ small MPI communicator};
    \node[jr@medgray,anchor=east,align=right]
      at ($(xmax) + 0.5*(0,\Ldirect)$)
      {single core};
    \node [circle,minimum width=0.2cm] at (0,0) {};
  \end{tikzpicture}
  \hspace{2em} 
  \begin{tikzpicture}[scale=0.50]
    \footnotesize
    \tikzset{
        mglevel/.style={
            draw=#1!50!black,
            fill=#1!\colorSaturation},
        mglabel/.style={
            draw=#1!50!black,thin,rectangle,rounded corners=2pt,
            top color=#1!\colorSaturation,bottom color=#1!\colorSaturationMax,
            text=black,minimum width=0.9cm,minimum height=0.4cm},
        mgsmooth/.style={
            draw=#1!50!black,thin,circle,
            top color=#1!\colorSaturation,bottom color=#1!\colorSaturationMax,
            text=black,minimum width=0.2cm},
        mgrestrict/.style={
            #1!90!black,thick,->,>=stealth},
        mginterp/.style={
            #1!90!black,thick,<-,>=stealth},
    }
    \def\Lsmg{7}
    \def\Lgmg{5}
    \def\Lamg{2}
    \coordinate (xmin) at ($-\W*(\Lsmg,0)$);
    \coordinate (xmax) at ($+\W*(\Lsmg,0) + (\Wcomment,0)$);
    \coordinate (ybbt) at ($0.5*(0,\Lbbt+\Lsmg+2*\LbbtOffset)$);
    \node[above,black] at (0,\Lbbt+\LbbtOffset+0.1) {\textbf{HMG V-cycle}};
    \draw[gray] (+\W*\Lsmg,\Lsmg) -- ($(xmax) + (0,\Lsmg)$);
    \draw[gray] (+\W*\Lgmg,\Lgmg) -- ($(xmax) + (0,\Lgmg)$);
    \draw[gray] (+\W*\Lamg,\Lamg) -- ($(xmax) + (0,\Lamg)$);
    \node (bD1) [mgsmooth=\colorBBT] at ($(-\W*\Lsmg,0) + (ybbt)$) {};
    \node (bU1) [mgsmooth=\colorBBT] at ($(+\W*\Lsmg,0) + (ybbt)$) {};
    \node (pD1) [mgsmooth=\colorSMG] at (-\W*\Lsmg,\Lsmg) {};
    \node (pU1) [mgsmooth=\colorSMG] at (+\W*\Lsmg,\Lsmg) {};
    \node (pD2) [mgsmooth=\colorSMG] at (-\W*\Lsmg+\W,\Lsmg-1) {};
    \node (pU2) [mgsmooth=\colorSMG] at (+\W*\Lsmg-\W,\Lsmg-1) {};
    \node (hD1) [mgsmooth=\colorGMG] at (-\W*\Lgmg,\Lgmg) {};
    \node (hU1) [mgsmooth=\colorGMG] at (+\W*\Lgmg,\Lgmg) {};
    \node (hD2) [mgsmooth=\colorGMG] at (-\W*\Lgmg+\W,\Lgmg-1) {};
    \node (hU2) [mgsmooth=\colorGMG] at (+\W*\Lgmg-\W,\Lgmg-1) {};
    \node (hD3) [mgsmooth=\colorGMG] at (-\W*\Lgmg+2*\W,\Lgmg-2) {};
    \node (hU3) [mgsmooth=\colorGMG] at (+\W*\Lgmg-2*\W,\Lgmg-2) {};
    \node (aD1) [mgsmooth=\colorAMG] at (-\W*\Lamg,\Lamg) {};
    \node (aU1) [mgsmooth=\colorAMG] at (+\W*\Lamg,\Lamg) {};
    \node (aD2) [mgsmooth=\colorAMG] at (-\W*\Lamg+\W,\Lamg-1) {};
    \node (aU2) [mgsmooth=\colorAMG] at (+\W*\Lamg-\W,\Lamg-1) {};
    \node (dir) [mgsmooth=\colorDirect] at (0,0) {};
    \draw [mgrestrict=\colorBBT] (bD1) -- (pD1);
    \draw [mgrestrict=\colorSMG] (pD1) -- (pD2);
    \draw [mgrestrict=\colorSMG] (pD2) -- (hD1);
    \draw [mgrestrict=\colorGMG] (hD1) -- (hD2);
    \draw [mgrestrict=\colorGMG] (hD2) -- (hD3);
    \draw [mgrestrict=\colorGMG] (hD3) -- (aD1);
    \draw [mgrestrict=\colorAMG] (aD1) -- (aD2);
    \draw [mgrestrict=\colorAMG] (aD2) -- (dir);
    \draw [mginterp=\colorBBT] (bU1) -- (pU1);
    \draw [mginterp=\colorSMG] (pU1) -- (pU2);
    \draw [mginterp=\colorSMG] (pU2) -- (hU1);
    \draw [mginterp=\colorGMG] (hU1) -- (hU2);
    \draw [mginterp=\colorGMG] (hU2) -- (hU3);
    \draw [mginterp=\colorGMG] (hU3) -- (aU1);
    \draw [mginterp=\colorAMG] (aU1) -- (aU2);
    \draw [mginterp=\colorAMG] (aU2) -- (dir);
    \node[mglabel=\colorSMG,anchor=north] at (0,\Lsmg) {$p$-MG};
    \node[mglabel=\colorGMG,anchor=north] at (0,\Lgmg) {$h$-MG};
    \node[mglabel=\colorAMG,anchor=south] at (0,\Lamg+0.2) {AMG};
    \node[mglabel=\colorDirect,anchor=south east]
      at ($(-\W*\Ldirect-\W,-0.2)$) {direct};
    \node[jr@medgray,anchor=east,align=right]
      at ($(xmax) + (ybbt)$)
      {modal to \\ nodal proj.};
    \node[jr@medgray,anchor=east,align=right]
      at ($(xmax) + 0.5*(0,\Lsmg+\Lgmg)$)
      {high-order \\ $\LL$-projection};
    \node[jr@medgray,anchor=east,align=right]
      at ($(xmax) + 0.5*(0,\Lgmg+\Lamg)$)
      {linear \\ $\LL$-projection};
    \node[jr@medgray,anchor=east,align=right]
      at ($(xmax) + (0,\Ldirect)$)
      {linear \\ projection};
  \end{tikzpicture}
  \caption{%
    Hybrid spectral-geometric-algebraic multigrid (HMG).
    \infigure{Left}: Illustration of multigrid hierarchy.
    From top to bottom, first, the multigrid levels are obtained by spectral
    coarsening \infigure{(dark blue)}.
    Next, the mesh is geometrically coarsened and repartitioned on successively
    fewer cores to minimize communication \infigure{(light blue)}.  Finally,
    AMG further reduces problem size and core count \infigure{(light red)}.
    The multigrid hierarchy for the pressure Poisson
    operator~$\PressPoissonRightMat$ additionally involves smoothing in the
    discontinuous, modal pressure space \infigure{(green)}. The
    projection from the discontinuous, modal to a continuous finite element
    nodal basis uses a lumped mass matrix in the nodal space to avoid the
    global mass matrix system solve.
    \infigure{Right}: The multigrid V-cycle consists of smoothing at each level
    of the hierarchy \infigure{(circles)} and intergrid transfer operators
    (\infigure{arrows downward} for restriction and \infigure{arrows upward}
    for interpolation).  To enhance efficacy of the the V-cycle as a
    preconditioner, different types of projection operators are employed for
    these operators depending on the phase within the V-cycle.
  }
  \label{fig:hmg}
\end{figure}

\section{Algorithmic and parallel scalability for \HMGwBFBT\ Stokes
preconditioner}
\label{sec:scalability}

After establishing the robustness of the Stokes solver with
\wBFBT\ preconditioning in theory (\cref{sec:theory}) and numerically
(\cref{sec:robustness}), addressing issues associated with Dirichlet
boundary conditions (\cref{sec:boundary-numerics}), and introducing an
effective and scalable multigrid method (\cref{sec:hmg}), in this
section we finally study the scalability of the Stokes solver building on
\HMGwBFBT.  One aspect of scalability is algorithmic scalability,
i.e., the dependence of Krylov iterations on the mesh resolution
and the discretization order.  The second aspect is parallel
scalability of the implementation, i.e., runtime measured on
increasing numbers of compute cores.  Studying both aspects is
required to fully assess the performance of a solver at scale.

The algorithmic scalability in \cref{tab:scalability-alg} shows results for the
Stokes solver as well as its individual components by reporting iteration numbers
for solving the systems
  $\ViscStressMat\velvec=\rhsvelvec$
and
  $\PressPoissonRightMat\pressvec=\rhspressvec$.
Studying the individual components allows us to observe HMG performance in
isolation and to compare it to the algorithmic scalability of the full Stokes
system, which is indicative of the quality of the \wBFBT\ Schur complement
approximation.
All systems
are solved
with preconditioned GMRES down to a relative tolerance of $10^{-6}$.  The
preconditioners for $\ViscStressMat$ and $\PressPoissonRightMat$ are
HMG-V-cycles as described in \cref{sec:hmg}.  For the \wBFBT\ preconditioner,
we set a constant left boundary amplification $\bdramp_l=1$ and vary the right
boundary amplification $\bdramp_r$ according to results from
\cref{sec:boundary-numerics}.
The iteration counts in \cref{tab:scalability-alg-level} show textbook
mesh independence when increasing the level of refinement $\ell$.
This holds for each component, $\ViscStressMat$ and
$\PressPoissonRightMat$, and also the whole Stokes solver, and hence
we conclude that the Schur complement approximation by \wBFBT\ is
mesh-independent.
When the discretization order $k$ is increased, the iteration counts presented in
\cref{tab:scalability-alg-order} increase mildly.  The convergence of both
components $\ViscStressMat$ and $\PressPoissonRightMat$ exhibits a moderate
dependence on $k$.  Since the increase in number of iterations is sightly
larger for the full Stokes solve than for $\ViscStressMat$ and
$\PressPoissonRightMat$, we suspect a mild deterioration of \wBFBT\ as a Schur
complement approximation.

\begin{table}
  \caption[Algorithmic scalability for Stokes solver]{%
    Algorithmic scalability for Stokes solver with \HMGwBFBT\ preconditioning
    while
    \subref{tab:scalability-alg-level} varying mesh refinement level $\ell$ and
    \subref{tab:scalability-alg-order} varying discretization order $k$
    (problem S16-rand, $\dynamicratio(\visc)=10^6$).
    Computational cost is expressed in number of GMRES iterations
    (abbreviated by It.) for full Stokes solve ($10^{-6}$ residual reduction).
    Left boundary amplification for $\WbfbtScalLeftMat$ is fixed to
    $\bdramp_l=1$; right boundary amplification $\bdramp_r$ for
    $\WbfbtScalRightMat$ varies.
    Additionally, the numbers of GMRES iterations for solving only the
    sub-systems $\ViscStressMat\velvec=\rhsvelvec$ and
    $\PressPoissonRightMat\pressvec=\rhspressvec$ are given for demonstration
    of HMG efficacy
    (here, $\rhsvelvec$ is the right-hand side of the momentum equation and
    $\rhspressvec$ is the discreet representation of $\divergence\rhsvel$;
    however, random right-hand sides would give similar convergence results).
  }
  \label{tab:scalability-alg}
  \centering
  \begin{subtable}[b]{0.49\columnwidth} 
    \centering
    \caption{Algorithmic scalability (fixed order $k=2$)}
    \label{tab:scalability-alg-level}
    \footnotesize
    \setlength{\tabcolsep}{0.5em}  
    \begin{tabular}{ccrlrlrl}
      \toprule
      $\ell$ & $a_r$ &
        $\vel$-DOF   & It. &
        $\press$-DOF & It. &
        DOF          & It. \\
      & &
        $[\times 10^6]$ & $\ViscStressMat$\hspace{-1em} &
        $[\times 10^6]$ & $\PressPoissonRightMat$\hspace{-1em} &
        $[\times 10^6]$ & \textbf{Stokes} \\
      \midrule
       4 &  1 &     0.11 & 18 &    0.02 & 8 &     0.12 & 40 \\
       5 &  2 &     0.82 & 18 &    0.13 & 7 &     0.95 & 33 \\
       6 &  4 &     6.44 & 18 &    1.05 & 6 &     7.49 & 33 \\
       7 &  8 &    50.92 & 18 &    8.39 & 6 &    59.31 & 34 \\
       8 & 16 &   405.02 & 18 &   67.11 & 6 &   472.12 & 34 \\
       9 & 32 &  3230.67 & 18 &  536.87 & 6 &  3767.54 & 34 \\
      10 & 64 & 25807.57 & 18 & 4294.97 & 6 & 30102.53 & 34 \\
      \bottomrule
    \end{tabular}
  \end{subtable}
  \hfill 
  \begin{subtable}[b]{0.49\columnwidth} 
    \centering
    \caption{Algorithmic scalability (fixed level $\ell=5$)}
    \label{tab:scalability-alg-order}
    \footnotesize
    \setlength{\tabcolsep}{0.5em}  
    \begin{tabular}{ccrlrlrl}
      \toprule
      $k$ & $a_r$ &
        $\vel$-DOF   & It. &
        $\press$-DOF & It. &
        DOF          & It. \\
      & &
        $[\times 10^6]$ & $\ViscStressMat$ &
        $[\times 10^6]$ & $\PressPoissonRightMat$ &
        $[\times 10^6]$ & \textbf{Stokes} \\
      \midrule
      2 &   2 &  0.82 & 18 & 0.13 & \phantom{1}7 &  0.95 & 33 \\
      3 &   4 &  2.74 & 20 & 0.32 & \phantom{1}8 &  3.07 & 37 \\
      4 &   8 &  6.44 & 20 & 0.66 & \phantom{1}7 &  7.10 & 36 \\
      5 &  16 & 12.52 & 23 & 1.15 &           12 & 13.67 & 43 \\
      6 &  32 & 21.56 & 23 & 1.84 &           12 & 23.40 & 50 \\
      7 &  64 & 34.17 & 22 & 2.75 &           10 & 36.92 & 54 \\
      8 & 128 & 50.92 & 22 & 3.93 &           10 & 54.86 & 67 \\
      \bottomrule
    \end{tabular}
  \end{subtable}
\end{table}

The following parallel scalability results complement the results presented
in~\cite{RudiMalossiIsaacEtAl15}, where scalability to millions of
threads was demonstrated on
IBM's BlueGene/Q architecture, which
is specifically designed for large-scale supercomputers and thus
differs from conventional clusters.
The parallel scalability results here were obtained on the full Lonestar~5
peta-scale system, which represents a rather conventional cluster,
housed at the Texas Advanced Computing Center (TACC).
The Lonestar~5 supercomputer entered production in January 2016 and
is a Cray XC40 system consisting of 1252 compute nodes.
Each node is equipped with two Intel Haswell 12-core processors
(Xeon E5-2680v3) and 64~GBytes of memory. Inter-node communication is
based on an Aries Dragonfly topology network that provides dynamic
routing and thus enables optimal use of the system bandwidth.

Results for weak scalability (DOF/core fixed to $\sim$1 million)
in \cref{fig:scalability-weak} show that the Stokes solver with
\wBFBT\ \infigure{(blue curve)} maintains 90\% parallel efficiency
over a 618-fold increase in degrees of freedom along with cores.  Even
for the setup of the Stokes solver \infigure{(green curve)}, which
mainly involves generation of the HMG hierarchy, we observe 71\%
parallel efficiency.  These are
excellent results for such a
complex implicit multilevel solver with optimal algorithmic
performance (when the mesh is refined, or nearly algorithmically
optimal when the order is increased) and with convergence that is
independent of the number of cores.

Finally, \cref{fig:scalability-strong} reports strong scalability
results (overall DOF fixed to 59 million) and how the number of OpenMP
threads substituting MPI ranks influences speedup.%
\footnote{%
  Even though the processors of Lonestar~5 support two threads per physical
  core (Intel Hyper-Threading Technology), assigning more than one
  OpenMP thread per core did not improve performance.
}
Over the
78-fold increase from 48 to 3744 cores, efficiency reduces moderately,
to a worst-case 68\% for 24$\times$OMP1.  However, note that in the
largest run with 29,640 cores, the granularity is only
$\sim$2000~DOF/core, which is extremely challenging for strong
scalability.  In this case, due to the increased communication volume,
overlapping with decreased amounts of computation becomes impossible
and communication dominates the runtime.  This behavior is expected
for an implicit solver, especially for a multilevel
method that does not sacrifice algorithmic optimality for parallel
scalability.

\begin{figure}\centering
  \tikzset{
    scalgraph/.style={
      color=#1, line width=2pt},
    scallabeltop/.style={
      text=#1, font=\footnotesize\bf,
      anchor=south east, yshift=+0.05cm},
    scallabelbottom/.style={
      text=#1, font=\footnotesize\bf,
      anchor=north east, yshift=-0.2cm},
  }
  \pgfplotsset{
    scalaxis/.style={
      compat=newest,
      width=10.5cm,
      height=6.5cm, 
      xmin=48,
      xmax=29640,
      xtick={48, 456, 3744, 29640},
      grid=major,
      tick label style={font=\footnotesize},
      xticklabel style={align=center},
      title style={align=center, font=\footnotesize},
      xlabel style={align=center, font=\footnotesize},
      ylabel style={align=center, font=\footnotesize},
      legend style={at={(0.01,0.98)}, anchor=north west, font=\footnotesize},
      legend cell align=left},
    scalaxisscale/.style={
      /pgfplots/xmode=log,
      /pgfplots/ymode=log},
    scalaxisscale/.belongs to family=/pgfplots/scale,
  }
  \begin{subfigure}[b]{\columnwidth}
    \centering
    \begin{tikzpicture}
      \def\colorIdeal{jr@gray}
      \def\colorA{jr@blue}
      \def\colorB{jr@green}
      \begin{axis}[
        scalaxisscale,scalaxis,
        xlabel={Number of cores \minor{($\sim10^6$ DOF/core)}},
        ylabel={Degrees of freedom per unit time},
        ylabel style={align=center, font=\footnotesize,
                      at={(axis description cs:-0.1,0.44)}, anchor=south},
        xticklabels={\num{48}, \num{456}, \num{3744}, \num{29640}},
        ymin=1e6,
        ymax=1e11,
        ytick={1e6, 1e7, 1e8, 1e9, 1e10},
      ]
        \addplot[scalgraph=\colorIdeal] coordinates {
          (   48, 1.15e7)
          (29640, 7.10e9)
        };
        \addplot[scalgraph=\colorIdeal] coordinates {
          (   48, 1.51e6)
          (29640, 9.32e8)
        };
        \addplot[scalgraph=\colorA, mark=square*] coordinates {
          (   48, 1.15e7)
          (  456, 1.06e8)
          ( 3744, 8.49e8)
          (29640, 6.40e9)
        };
        \node [scallabeltop=\colorA] at (axis cs:   456, 1.06e8) {0.97};
        \node [scallabeltop=\colorA] at (axis cs:  3744, 8.49e8) {0.94};
        \node [scallabeltop=\colorA] at (axis cs: 29640, 6.40e9) {0.90};
        \addplot[scalgraph=\colorB, mark=triangle*] coordinates {
          (   48, 1.51e6)
          (  456, 1.39e7)
          ( 3744, 1.11e8)
          (29640, 6.65e8)
        };
        \node [scallabelbottom=\colorB] at (axis cs:   456, 1.39e7) {0.97};
        \node [scallabelbottom=\colorB] at (axis cs:  3744, 1.11e8) {0.94};
        \node [scallabelbottom=\colorB] at (axis cs: 29640, 6.65e8) {0.71};
        \legend{Ideal weak scalability, ,
                Solve [DOF/(sec/iter)],
                Setup [DOF/sec]}
      \end{axis}
    \end{tikzpicture}
    \hspace{5ex} %
    \caption{Weak scalability ($k=2$, $\ell=7,\ldots,10$)}
    \label{fig:scalability-weak}
  \end{subfigure}
  \\[5ex]
  \begin{subfigure}[b]{\columnwidth}
    \centering
    \begin{tikzpicture}
      \def\colorIdeal{jr@gray}
      \def\colorA{jr@purple}
      \def\colorC{jr@pink}
      \def\colorD{jr@orange}
      \def\colorE{jr@brown}
      \def\markA{*}
      \def\markC{triangle*}
      \def\markD{diamond*}
      \def\markE{square*}
      \begin{axis}[
        scalaxisscale,scalaxis,
        xlabel={Number of cores \minor{(DOF/core)}},
        ylabel={Speedup [baseline sec/sec]},
        ylabel style={align=center, font=\footnotesize,
                      at={(axis description cs:-0.1,0.5)}, anchor=south},
        xticklabels={
          \num{   48} \\ \minor{(\num{1235675})},
          \num{  456} \\ \minor{(\num{ 130071})},
          \num{ 3744} \\ \minor{(\num{  15842})},
          \num{29640} \\ \minor{(\num{   2009})}},
        ymin=1,
        ymax=2048,
        ytick={1, 8, 64, 512},
        yticklabels={1, 8, 64, 512},
      ]
        \addplot[scalgraph=\colorIdeal] coordinates {
          (   48,   1.0)
          (29640, 617.5)
        };
        \addplot[scalgraph=\colorA, mark=\markA] coordinates {
          (   48,  1.00)
          (  456,  9.48)
          ( 3744, 52.68)
          (29640, 33.01)
        };
        \node [scallabelbottom=\colorA] at (axis cs:   456,  9.48) {1.00};
        \node [scallabelbottom=\colorA] at (axis cs:  3744, 52.68) {0.68};
        \node [scallabelbottom=\colorA] at (axis cs: 29640, 33.01) {0.05};
        \addplot[scalgraph=\colorC, mark=\markC] coordinates {
          (   48,  1.00)
          (  456,  9.70)
          ( 3744, 57.42)
          (29640, 41.32)
        };
        \addplot[scalgraph=\colorD, mark=\markD] coordinates {
          (   48,  1.00)
          (  456, 10.31)
          ( 3744, 72.60)
          (29640, 64.74)
        };
        \addplot[scalgraph=\colorE, mark=\markE] coordinates {
          (   48,   1.00)
          (  456,  11.44)
          ( 3744,  83.92)
          (29640, 109.28)
        };
        \node [scallabeltop=\colorE] at (axis cs:   456,  11.44) {1.20};
        \node [scallabeltop=\colorE] at (axis cs:  3744,  83.92) {1.08};
        \node [scallabeltop=\colorE] at (axis cs: 29640, 109.28) {0.18};
        \legend{Ideal speedup,
                Solve 24$\times$OMP1,
                Solve 6$\times$OMP4,
                Solve 2$\times$OMP12,
                Solve 1$\times$OMP24}
      \end{axis}
    \end{tikzpicture}
    \hspace{5ex} %
    \caption{Strong scalability ($k=2$, $\ell=7$)}
    \label{fig:scalability-strong}
  \end{subfigure}
  \caption[Parallel scalability on Lonestar 5 for Stokes solver]{%
    Parallel scalability on Lonestar 5 for Stokes solver with \HMGwBFBT\
    preconditioning (problem \textup{S16-rand}, $\dynamicratio(\visc)=10^6$ as
    in \cref{tab:scalability-alg-level}).
    \subref{fig:scalability-weak} Weak scalability of setup and solve
    phases (normalized w.r.t.\ deviations from const.\ DOF/core).
    \infigure{Numbers along the graph lines} indicate weak parallel efficiency
    w.r.t.\ ideal weak scalability (baseline is 48 cores result).
    DOF/core is $\sim$1 million;
    the largest problem size on 29,640 cores has 30 billion DOF.
    \subref{fig:scalability-strong} Strong scalability of solve phase
    for different configurations of OpenMP threads (OMP) substituting
    MPI ranks on each node consisting of 24 cores.
    \infigure{Numbers along the graph lines} indicate strong efficiency w.r.t.\
    ideal speedup (baseline is 48 cores result).
  }
  \label{fig:scalability}
\end{figure}

\section{Conclusions}

For Stokes flow problems with highly heterogeneous viscosity, the
commonly used inverse viscosity-weighted mass matrix approximation of
the Schur complement can be insufficient. As a consequence,
convergence of Schur complement-based Krylov solvers
can be extremely slow. While each iteration with the weighted BFBT
(\wBFBT) Schur complement approximation proposed in this paper is
computationally more costly, we observe that it results in robust and
fast convergence even for complex viscosity structures and for up to
ten orders of magnitude viscosity contrasts---properties than occur,
for instance, in problems involving non-Newtonian geophysical
fluids.
Our numerical findings
are supported by theoretical spectral
equivalence results. To the best of our knowledge, a similar analysis
has not been shown for any \BFBT-type method before.

At Dirichlet boundaries, we use a modification of \wBFBT\ that is
necessary for mesh-independent convergence. In this modification, we
dampen the influence of elements at these boundaries, which would
otherwise dominate in \wBFBT\ and degrade its efficacy as a
preconditioner. Finding an alternative remedy is a subject of current
research.

Indefinite problems with highly heterogeneous coefficients and
high-order discretizations present significant challenges for
efficient solvers, especially in parallel. Nevertheless, we have
demonstrated that with careful attention paid to algorithm design and
scalable implementation, viscosity-robust and mesh-independent solvers
can be designed that exhibit nearly optimal algorithmic and parallel
scalability over a wide range of problems sizes and core counts.

\section*{Acknowledgments}

We wish to offer our deep thanks to the Texas Advanced Computing Center (TACC)
for granting us early user access to Lonestar~5 and enabling runs on the full
system.
We appreciate many helpful discussions with Tobin Isaac, who also
allowed us to build on his previous work on Stokes solvers.
We also thank Carsten Burstedde for his dedicated work on the p4est
library.

\bibliographystyle{plain}
\bibliography{ccgo}

\end{document}